\documentclass[12pt, notitlepage]{article}

\usepackage[english]{babel}
\usepackage{amsmath}
\usepackage{amssymb}
\usepackage{amsfonts}
\usepackage{amsthm}
\usepackage{array}
\usepackage{calrsfs}
\usepackage{enumitem}
\usepackage{fancyhdr}
\usepackage[margin=1.25in]{geometry}
\usepackage{graphicx}
\usepackage[hidelinks]{hyperref}
\usepackage{inputenc}
\usepackage{etoolbox}
\usepackage{mdframed}
\usepackage{pgf}         
\usepackage{setspace}
\usepackage{tcolorbox}
\usepackage{thmtools}
\usepackage{tikz}
\usepackage{xcolor}
\usepackage{xparse}
\usepackage{xspace}
\usepackage{float}
\usetikzlibrary{automata, positioning, arrows}

\usepackage[maxbibnames = 10]{biblatex}
\usepackage{csquotes}
\usepackage{fontspec}

\usepackage[bigdelims]{newtxmath} 
\usepackage[cal=boondoxo]{mathalfa} 

\addbibresource{bib.bib}

\renewcommand{\mathbb}{\varmathbb}

\theoremstyle{plain}
\newtheorem{thm}{Theorem}[section]

\theoremstyle{definition}
\newtheorem{defin}{Definition}

\theoremstyle{plain}
\newtheorem{lemma}[thm]{Lemma}

\theoremstyle{remark}
\newtheorem*{rmk}{Remark}

\theoremstyle{plain}

\theoremstyle{plain}
\newtheorem{corollary}[thm]{Corollary}


\newcommand\vertiii[1]{{\left\vert\kern-0.25ex\left\vert\kern-0.25ex\left\vert #1 
    \right\vert\kern-0.25ex\right\vert\kern-0.25ex\right\vert}}

\newcommand\vertii[1]{{\left\vert\kern-0.25ex\left\vert #1 
    \right\vert\kern-0.25ex\right\vert}}



\NewDocumentCommand\esmes{O{E} O{\pazocal{A}} O{\mu}}{$(#1,#2,#3)$\xspace}

\NewDocumentCommand\esmesle{O{E} O{\pazocal{A}}}{$(#1,#2)$\xspace}

\NewDocumentCommand\enspol{O{A} O{X}}{#1[#2]\xspace}

\NewDocumentCommand\enspolk{O{K} O{X}} {\pazocal{#1}[#2]\xspace}

\NewDocumentCommand\enssf{O{A} O{X}}{#1[\kern-0.2ex[#2]\kern-0.2ex]\xspace}

\NewDocumentCommand{\cyl}{O{(t_0,x_0)} O{a} O{R} O{}}{\mathcal{C}#4(#1,#2,#3)}

\NewDocumentCommand\Lloc{m O{\Omega}}{\L^{#1}_{\text{loc}}(#2)}


\newcommand{\ind}[1]{1\kern-1ex 1_{#1}}


\NewDocumentCommand{\produit}{O{k} O{0} O{n}}{\prod_{#1 = #2}^{#3} \,}

\newcommand{\ps}[2]{\left( #1 ,\, #2 \right)}

\NewDocumentCommand{\psl}{m m O{\Omega}}{\ps{#1}{#2}_{\L^2(#3)}}

\NewDocumentCommand{\somme}{O{k} O{0} O{n}}{\sum_{#1 = #2}^{#3} \,}



\renewcommand{\L}{\pazocal L}

\newcommand{\R}{\mathbb R}


\newcommand{\conv}{\text{Conv}}

\newcommand{\vect}{\text{Span}}




\DeclareMathAlphabet{\pazocal}{OMS}{zplm}{m}{n}

\pagestyle{fancy}
\fancyhf{}
\fancyhead[L]{\rightmark}
\fancyhead[R]{\thepage}

\title{Equiprojective polytopes in higher dimension}

\author{Alice Cousaert \thanks{Université Sorbonne Paris Nord, CNRS, Laboratoire d'Informatique de Paris Nord, LIPN, F-93430 Villetaneuse, France ; cousaert@lipn.univ-paris13.fr} }

\date{}

\begin{document}

\maketitle

\begin{abstract}
    A 3-dimensional polytope is called $k$-equiprojective if every planar projection along a direction non-parallel to any facet is a $k$-gon. In this article, we generalise equiprojectivity to higher dimensions and give a lower bound on the number of combinatorial types of equiprojective polytopes. We also establish the pathwise connectedness of a subset of the Grassmannian in the case of $(d-2)$-dimensional spaces with conditions on the explicit path. This makes it possible to extend the Hasan--Lubiw characterisation of equiprojectivity to higher dimensions. Equiprojectivity provides cases relevant to the study of the Shadow Vertex algorithm, showing there is no hope minimising the complexity of the projection. It also offers a reverse point of view on the usual study of planar projections of polytopes as the projections have a fixed size.
\end{abstract}

\section{Introduction}

Equiprojective polytopes were considered by Shephard in his $9^{th}$ problem in~\cite{SHEPH} as 3\nobreakdash-dimensional polytopes such that almost every planar projections have the same combinatorial size (see \cite{MH} for examples of equiprojective polyhedra). This problem, restated by Hallard Croft, Kenneth Falconer and Richard Guy in \cite{Pb_in_geom}, concerns the construction of equiprojective 3\nobreakdash-dimensional polytopes. To answer this question, Masud Hasan and Anna Lubiw have established in~\cite{MHAL} a characterisation of equiprojectivity in terms of \emph{edge-facet}: a couple $(e,F)$ of an edge $e$ and a facet $F$ containing it, endowed with an orientation of $e$ with respect to $F$. Two distinct edge-facets $(e,F)$ and $(e',F')$ are said to be \emph{compensating} if $F$ and $F'$ are either the same facet or parallel facets and $e$ and $e'$ are parallel edges that have opposite orientations. They show that a polytope $P$ is equiprojective if and only if its edge-facets can be partitioned into compensating pairs. More recently, Théophile Buffière and Lionel Pournin established in \cite{TBLP} another characterisation of equiprojectivity using normal cones, allowing them to study the behaviour of equiprojectivity with respect to Minkowski sums and to deduce a lower bound on the number of combinatorial types of $k$-equiprojective polytopes for all integers $k$. These characterisations both rely on the fact that, as the direction of projection continuously moves, for equiprojective polytopes, the disappearance of an edge caused by the degeneracy of a facet is automatically compensated by the emergence of another parallel edge (located either on the same facet, or on an other parallel one).

The “almost every” projections we want to consider are those in a direction non-parallel to any facet, as they would otherwise send a facet to a segment, thus changing the combinatorial complexity of the projection. Our first endeavour will be to determine which are the planar projections we want to avoid in the high-dimensional case. This will lead us to a generalisation  equiprojectivity. In particular, we will see that high-dimensional zonotopes are still $k$-equiprojective with a known $k$ (this was remarked in dimension 3 in \cite{TBLP}), and if we fix the dimension $d$ that there are at least \begin{equation}
    \label{borne}
    k^{\frac k2\left(d^2-2d + O_{k\to + \infty}\left( \frac{1}{\log k} \right) \right)}
\end{equation} combinatorial types of $k$-equiprojective polytopes when $k$ is even.

The Hasan--Lubiw characterisation relies on a lemma stating that we can continuously modify the direction of projection to any other such that at any given time there are no more than one facet (and eventually its parallel counterpart) that is degenerating along the path. This means that it is enough to study the changes of the combinatorial size of the planar projections when exactly one pair of parallel facets is degenerating. In section 4 we develop a notion of pathwise conectedness with effective paths in the Grassmanian and we use it to establish the connectedness of a subset of the Grassmannian in the following theorem of independent interest:

\begin{thm}
    \label{informal}
    Given a $d$-dimensional polytope $P$, we can connect any two of its admissible orthogonal projections within the space of planar orthogonal projections while making sure that there are only finitely many degeneracies, and no two non-parallel 2-dimensional faces of $P$ are degenerating simultaneously.
\end{thm}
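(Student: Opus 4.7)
The plan is to identify, for each parallel class of 2-dimensional faces of $P$, the subvariety of the Grassmannian $G(2,d)$ of projection planes along which that class degenerates, and then to connect the two given admissible projections by a path crossing these loci one at a time and finitely often.

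Group the 2-faces of $P$ into finitely many parallel classes indexed by their common 2-dimensional linear directions $V_1, \dots, V_N$. A 2-face with direction $V_i$ degenerates under orthogonal projection onto a plane $\pi$ if and only if $V_i \cap \pi^\perp \neq \{0\}$; set
$$B_i \;=\; \{\pi \in G(2,d) \;:\; V_i \cap \pi^\perp \neq \{0\}\}.$$
Via the duality $\pi \mapsto \pi^\perp$ identifying $G(2,d)$ with $G(d-2,d)$, each $B_i$ is a Schubert divisor and hence a real algebraic subvariety of codimension exactly $1$.

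For distinct directions $V_i \neq V_j$, an elementary dimension count (splitting according to whether $V_i$ and $V_j$ meet trivially or share a line, and parametrising each stratum by a pair of auxiliary lines $\ell \subset V_i$, $\ell' \subset V_j$ together with a $(d-2)$-space $\pi^\perp \supseteq \ell + \ell'$) shows that $B_i \cap B_j$ has real codimension at least $2$. Since there are finitely many parallel classes, the union $\Sigma = \bigcup_{i \neq j} B_i \cap B_j$ is a closed subset of codimension $\geq 2$ in the $2(d-2)$-dimensional manifold $G(2,d)$.

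To conclude, I would apply the effective-path framework developed in section 4 to connect the two projections $\pi_0, \pi_1$ by a smooth path in $G(2,d)$ that avoids $\Sigma$ and meets each smooth locus $B_i$ transversally. Avoiding $\Sigma$ is possible because its codimension is at least $2$ in a manifold of dimension at least $2$, and transversality with each $B_i$ ensures only finitely many intersection points along a compact path; at each such point, by the very definition of $\Sigma$, exactly one parallel class of 2-faces degenerates. The main obstacle will be the \emph{effective} aspect: mere existence follows from classical real-algebraic transversality, but producing an explicit parametrisation with quantitative control — so that the path provably avoids $\Sigma$ and crosses each $B_i$ only finitely often — requires the dedicated Grassmannian-path machinery of section 4 and careful bookkeeping of the Plücker equations cutting out each $B_i$.
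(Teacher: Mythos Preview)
Your Schubert-divisor and transversality approach is genuinely different from the paper's and, for the bare existence statement of Theorem~\ref{informal}, it would work: the $B_i$ are codimension-$1$ real-algebraic subvarieties of $G(2,d)$, the pairwise intersections $B_i\cap B_j$ have codimension at least $2$, and a generic smooth path between two admissible points avoids the codimension-$2$ locus $\Sigma$ while meeting each $B_i$ transversally, hence finitely often. One caution: your appeal to ``the effective-path framework developed in section~4'' and ``the dedicated Grassmannian-path machinery of section~4'' is circular, since Theorem~\ref{informal} is restated there as Theorem~\ref{balade} and section~4 consists precisely of its proof --- there is no separate machinery to invoke. Fortunately you do not actually need it for the statement as written: classical transversality already delivers the existence claimed, so drop that sentence and your outline stands.

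The paper takes the opposite route, explicitly rejecting Pl\"ucker coordinates and Schubert cells (see the discussion opening section~4). It works instead with free families of vectors: it normalises $P$ by an isometry (Lemma~\ref{step_1}), fixes a reference orthogonal $W_r^\perp$, and builds explicit affine paths $t\mapsto(u_1-tv,u_2,\dots,u_{d-2})$ along which each degeneracy condition is the vanishing of an \emph{affine} function of $t$, giving at most one degeneracy per class; simultaneous zeros are then removed by hand via small perturbations of the basis vectors (Lemmas~\ref{step_2} and~\ref{step_3}). What this buys over your argument is concrete control: the very same coordinate description is reused in section~5 to manufacture specific elementary transformations --- swapping visible and invisible chains, or shifting a single edge between chains in Lemma~\ref{chains} --- on which the proof of the characterisation Theorem~\ref{charact} depends. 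A purely non-constructive transversality proof of Theorem~\ref{informal} would leave those downstream constructions without a handle.
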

\noindent Where the notions of admissibility and degeneration are yet to be defined in section 2. For short, we will refer to $d$-dimensional polytope as $d$-polytope, and $k$-dimensional faces of a polytope $P$ as $k$-faces of $P$.

Finally, section 5 presents a generalisation of the edge-facet, then called \emph{edge-2\nobreakdash-faces} and its subtleties: it is the given of $(e,F,F_0)$ an edge $e$, a 2-face $F$ containing it and another 2-face $F_0$ parallel to $F$ such that we can see both of them on the boundary of a projection. In higher dimensions, we will see that we do not need to consider every possible pair of parallel 2-faces. The construction of the orientation and the notion of compensation will be done in the same fashion as what is done in~\cite{MHAL}. Two edge-2-faces $(e,F,F_0)$ and $(e',F',F'_0)$ are said to be compensating each other if $e$ and $e'$ are parallel and have opposite orientations and if either $F=F'$ and $F_0=F'_0$ or $F=F'_0$ and $F_0= F'$. We also present some technical lemmas relying on the aforementioned explicit paths, use them in order to precisely handle $d$\nobreakdash-polytopes the same way we would handle a 3-polytope, as well as the proof of the generalised Hasan--Lubiw characterisation:

\begin{thm}
    \label{charact}
    A $d$-polytope is equiprojective if and only if its edge-2-faces can be partitioned into compensating pairs.
\end{thm}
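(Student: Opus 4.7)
The plan is to follow the structure of the Hasan--Lubiw argument in dimension 3, now made available in arbitrary dimension by Theorem~\ref{informal}. I would treat the two implications in parallel by showing that equiprojectivity is equivalent to the vanishing of a certain combinatorial quantity at every ``simple'' degeneracy, and then read off the compensating partition from this vanishing.

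First I would fix two admissible planar projections of $P$ and, by Theorem~\ref{informal}, connect them by an explicit path in the Grassmannian along which only finitely many degeneracies occur, each involving exactly one pair $\{F,F_0\}$ of parallel 2-faces of $P$ (parallel because non-parallel 2-faces never degenerate simultaneously). On any open sub-interval of the path containing no degenerate time, the combinatorial size of the projection is locally constant, so the whole variation in size is concentrated at the degenerate times $t_\star$. Consequently, $P$ is equiprojective if and only if, for every admissible initial direction and every such path, the change of size across each $t_\star$ vanishes.

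Next I would perform the local analysis at a single degenerate time $t_\star$ at which $\{F,F_0\}$ degenerates. The key structural claim is that the only edges whose silhouette status (membership in the boundary of the projection) changes across $t_\star$ are precisely the edges $e$ appearing in a triple $(e,F,F_0)$ or $(e,F_0,F)$ that is an edge-2-face in the sense of Section~5. For each such edge-2-face, the sign with which $e$ enters or leaves the silhouette at $t_\star$ defines the orientation introduced previously, and the net change of size at $t_\star$ is exactly the signed sum of these contributions. The vanishing of that signed sum then reads as follows: the edge-2-faces attached to the unordered pair $\{F,F_0\}$ can be grouped in couples of parallel edges with opposite orientation and matching parallel 2-face pair, which is precisely the compensation condition. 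Ranging over all degenerate times (and all initial directions, which together cover every unordered pair $\{F,F_0\}$ that can produce an edge-2-face by Theorem~\ref{informal}) yields the equivalence.

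The main obstacle will be the local analysis at $t_\star$. One must check that when the affine hull of $\{F,F_0\}$ sweeps through the kernel of the projection, no edge lying outside $F\cup F_0$ can change silhouette status; this is where the isolation property provided by Theorem~\ref{informal} is essential, as it reduces the degeneracy to a picture that behaves like the 3-dimensional one treated in~\cite{MHAL}. One must then verify that the edge-2-face orientation constructed combinatorially coincides with the dynamical entering/leaving dichotomy, and that every edge that does change silhouette status actually sits in a valid edge-2-face, i.e.\ that $\{F,F_0\}$ is visible on the boundary of the projection on at least one side of $t_\star$. Once these verifications are in place, the global argument follows the same bookkeeping as in the 3-dimensional proof of~\cite{MHAL}.
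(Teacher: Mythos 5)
Your overall strategy --- reduce to elementary transformations via Theorem~\ref{informal}, localise the change of combinatorial size at each degenerate time, and express it as a balance condition on the edges of the (at most two) 2-faces visible on the boundary --- is the same as the paper's, and your forward implication (compensating partition $\Rightarrow$ balance at every degeneracy $\Rightarrow$ equiprojectivity) matches Theorem~\ref{frt_implication}, including the verifications you flag: that the endpoints of the degenerating face are pinned by visible edges outside $F\cup F_0$, and that no third representative of the class can reach the boundary.

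The genuine gap is in the reverse implication. What equiprojectivity gives you, for each elementary transformation, is one cardinality identity $|V(F)|+|V(F_0)|=|I(F)|+|I(F_0)|$ between visible and invisible chains. You then assert that the vanishing of this signed sum ``reads as'' the existence of a partition into compensating pairs. It does not: a single counting identity per pair $\{F,F_0\}$ is strictly weaker than a perfect matching of parallel, oppositely oriented edges, and can hold while no such matching exists. The paper closes this gap with two ingredients your proposal omits. First, a lemma constructing, for \emph{any} simultaneously visible pair, an elementary transformation that actually swaps the visible and invisible chains (so that every such pair is genuinely probed --- Theorem~\ref{informal} alone does not guarantee that a given pair degenerates, let alone swaps, along the paths it provides). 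Second, Lemma~\ref{chains}, which produces two elementary transformations whose visible chains differ by exactly one prescribed edge, $V_1(F)=V_2(F)\sqcup\{e\}$; subtracting the two resulting identities isolates the contribution of $e$, and a case analysis on how many edges of $F$ and $F_0$ are parallel to $e$ (at most two each, since they are convex polygons) yields a contradiction whenever the group of at most four compatible edge-2-faces containing $(e,F,F_0)$ cannot be partitioned. Without this freedom to perturb the transformation edge by edge, your argument establishes only the counting condition, not the compensation structure.
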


In some sense, equiprojectivity is the reverse point of view of several well-studied problems, such as the extension problem (see \cite{Exp_Low} and \cite{TR}), where we would like to simplify the study of a given polytope by seeing it as the projection of a higher-dimensional polytope. Another example of such reverse problem would be the work of Alexander Black and Francisco Criado in~\cite{Black}, where they fix the polytope and then study the expected size of a random projection, while in this article we fix the size of the planar projections and study properties on the polytope. Moreover, understanding equiprojectivity may help in the study of the Shadow Vertex algorithm (see \cite{DDSH} and also \cite{DDNH}), especially showing that in some cases, it is impossible to find a planar projection minimizing the combinatorial complexity of the shadow. A related problem asks for the combinatorics of the sections of a polytope by affine spaces \cite{affine_1, affine_2, affine_3, affine_4}. All of these problems deal with the interplay between convexity and combinatorics that is still elusive despite its importance in diverse areas, such as linear optimisation \cite{lin_op_3, lin_op_2, lin_op_1, lin_op_4, lien_1, lien_2}, geometry \cite{moser, SHEPH, SHEPH2, lien_3}, algebra \cite{affine_1, alg} or convex analysis \cite{conv_an, affine_4}.

\section{A definition of equiprojectivity in higher dimension}

The aim of this section is to generalise the definition of equiprojectivity, and especially to determine which are the planar projections we need to avoid. In this article, we will always assume that a polytope $P\subset \R^d$ is full-dimensional, up to working in the smallest affine space containing $P$. Our new definition would be:

\begin{defin}
    \label{new_def}
    A polytope $P\subset \R^d$ is said to be \emph{$k$-equiprojective} if and only if every projection on an “admissible plane” is a $k$-gon.
\end{defin}

The only task left is to discuss what “admissible plane” would mean in higher dimensions. In dimension 3, the reason why we want to exclude the projections parallel to a facet of $P$ is to be sure that no triplet of vertices are sent to aligned points on the boundary of the projection. If it is the case, then it means that a facet of $P$ (and its parallel counterpart if it exists) is sent on an edge of the projection. We say that this face is \emph{degenerating along this projection}.

\begin{figure}[ht]
    \centering
    \begin{tikzpicture}[x=0.75pt,y=0.75pt,yscale=-1,xscale=1]

\draw   (100,2181.67) -- (170,2181.67) -- (170,2203.67) -- (100,2203.67) -- cycle ;
\draw   (100,2203.67) -- (170,2203.67) -- (170,2243.67) -- (100,2243.67) -- cycle ;
\draw  [dash pattern={on 4.5pt off 4.5pt}] (100,2181.67) -- (170,2181.67) -- (170,2221.67) -- (100,2221.67) -- cycle ;
\draw [color={rgb, 255:red, 208; green, 2; blue, 27 }  ,draw opacity=1 ][line width=1.5]    (100,2181.67) -- (100,2243.67) ;
\draw [color={rgb, 255:red, 208; green, 2; blue, 27 }  ,draw opacity=1 ][line width=1.5]    (170,2181.67) -- (170,2243.67) ;
\draw [color={rgb, 255:red, 208; green, 2; blue, 27 }  ,draw opacity=1 ]   (100,2181.67) -- (100,2203.67) ;
\draw [shift={(100,2203.67)}, rotate = 90] [color={rgb, 255:red, 208; green, 2; blue, 27 }  ,draw opacity=1 ][fill={rgb, 255:red, 208; green, 2; blue, 27 }  ,fill opacity=1 ][line width=0.75]      (0, 0) circle [x radius= 3.35, y radius= 3.35]   ;
\draw [shift={(100,2181.67)}, rotate = 90] [color={rgb, 255:red, 208; green, 2; blue, 27 }  ,draw opacity=1 ][fill={rgb, 255:red, 208; green, 2; blue, 27 }  ,fill opacity=1 ][line width=0.75]      (0, 0) circle [x radius= 3.35, y radius= 3.35]   ;
\draw [color={rgb, 255:red, 208; green, 2; blue, 27 }  ,draw opacity=1 ]   (100,2203.67) -- (100,2243.67) ;
\draw [shift={(100,2243.67)}, rotate = 90] [color={rgb, 255:red, 208; green, 2; blue, 27 }  ,draw opacity=1 ][fill={rgb, 255:red, 208; green, 2; blue, 27 }  ,fill opacity=1 ][line width=0.75]      (0, 0) circle [x radius= 3.35, y radius= 3.35]   ;

\end{tikzpicture}
    \caption{Example of degenerated projection of the cube with three vertices that are sent on aligned points on the boundary}
    \label{degenerated_cube}
\end{figure}
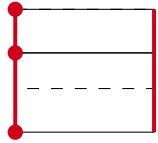

While an extension of the definition using the projection of triplets of vertices would seems natural, it would yield a very complex study, since apriori we would have to study every possible triplet of vertices. Let us simplify this study by showing first that we only need to consider the 2-faces of $P$. Let $W$ be a planar subspace of $\R^d$ and $\pi_W$ the orthogonal projection on $W$, we say that a 2-face $F$ of $P$ is \emph{degenerated for $W$} if the image of $F$ by $\pi_W$ is included in the boundary of $\pi_W(P)$. In particular, if a 2-face is degenerated for $W$, then there exists an edge of $\pi_W(P)$ containing $\pi_W(F)$, and then there are three vertices whose image by $\pi_W$ are aligned on the boundary. The reciprocal is also true: if there are at least three vertices whose image by $\pi_W$ are in the same edge, then there exists a 2-face degenerating for $W$. Indeed, by applying the Lemma 7.10 from \cite{ZIEG} to the edge containing the three aligned images, then the preimage by $\pi$ of this edge intersected with $P$ is a face of $P$ containing at least three vertices, and so it contains at least a 2-face degenerating for $W$. To sum up, like in dimension 3, we need to only pay attention to the 2-faces of $P$.

The downside of this definition of degeneration is that it is difficult to handle. We would rather work with a stronger condition which has a more convenient characterisation. We will forbid every projection sending a 2-face in a segment \emph{anywhere} (not necessarily in the boundary and possibly on a dot). It is clear that condition $(i)$: “no 2-face is sent to a segment” implies condition $(ii)$: “no 2-face is sent in an edge”. The reciprocal is false in higher dimension, where we have the following counter-example: if we project the hypercube $\mathopen[0,1\mathclose]^4$ on the planar space $\vect(e_1+e_2+e_3,2e_3+e_4)$, where $(e_1...,e_d)$ is the canonical basis of $\R^d$, we get a projection looking like the following:

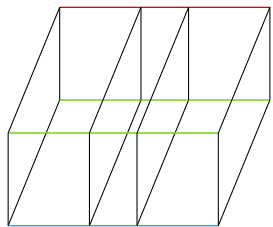
\begin{figure}[ht]
    \centering
    \begin{tikzpicture}[x=0.75pt,y=0.75pt,yscale=-1,xscale=1]
    
\draw    (183,4226.8) -- (157,4290.2) ;
\draw    (159,4226.8) -- (133,4290.2) ;
\draw    (118,4226.8) -- (92,4290.2) ;
\draw [color={rgb, 255:red, 208; green, 2; blue, 27 }  ,draw opacity=1 ]   (118,4180) -- (159,4180) ;
\draw [color={rgb, 255:red, 208; green, 2; blue, 27 }  ,draw opacity=1 ]   (183,4180) -- (224,4180) ;
\draw [color={rgb, 255:red, 208; green, 2; blue, 27 }  ,draw opacity=1 ]   (159,4180) -- (224,4180) ;
\draw [color={rgb, 255:red, 208; green, 2; blue, 27 }  ,draw opacity=1 ]   (118,4180) -- (183,4180) ;
\draw [color={rgb, 255:red, 74; green, 144; blue, 226 }  ,draw opacity=1 ]   (92,4290.2) -- (133,4290.2) ;
\draw [color={rgb, 255:red, 74; green, 144; blue, 226 }  ,draw opacity=1 ]   (157,4290.2) -- (198,4290.2) ;
\draw [color={rgb, 255:red, 74; green, 144; blue, 226 }  ,draw opacity=1 ]   (133,4290.2) -- (198,4290.2) ;
\draw [color={rgb, 255:red, 74; green, 144; blue, 226 }  ,draw opacity=1 ]   (92,4290.2) -- (157,4290.2) ;
\draw [color={rgb, 255:red, 126; green, 211; blue, 33 }  ,draw opacity=1 ]   (118,4226.8) -- (159,4226.8) ;
\draw [color={rgb, 255:red, 126; green, 211; blue, 33 }  ,draw opacity=1 ]   (183,4226.8) -- (224,4226.8) ;
\draw [color={rgb, 255:red, 126; green, 211; blue, 33 }  ,draw opacity=1 ]   (159,4226.8) -- (224,4226.8) ;
\draw [color={rgb, 255:red, 126; green, 211; blue, 33 }  ,draw opacity=1 ]   (118,4226.8) -- (183,4226.8) ;
\draw [color={rgb, 255:red, 126; green, 211; blue, 33 }  ,draw opacity=1 ]   (92,4243.4) -- (133,4243.4) ;
\draw [color={rgb, 255:red, 126; green, 211; blue, 33 }  ,draw opacity=1 ]   (157,4243.4) -- (198,4243.4) ;
\draw [color={rgb, 255:red, 126; green, 211; blue, 33 }  ,draw opacity=1 ]   (133,4243.4) -- (198,4243.4) ;
\draw [color={rgb, 255:red, 126; green, 211; blue, 33 }  ,draw opacity=1 ]   (92,4243.4) -- (157,4243.4) ;
\draw    (92,4243.4) -- (92,4290.2) ;
\draw    (133,4243.4) -- (133,4290.2) ;
\draw    (157,4243.4) -- (157,4290.2) ;
\draw    (198,4243.4) -- (198,4290.2) ;
\draw    (118,4180) -- (92,4243.4) ;
\draw    (118,4180) -- (118,4226.8) ;
\draw    (159,4180) -- (159,4226.8) ;
\draw    (183,4180) -- (183,4226.8) ;
\draw    (224,4180) -- (224,4226.8) ;
\draw    (224,4226.8) -- (198,4290.2) ;
\draw    (159,4180) -- (133,4243.4) ;
\draw    (183,4180) -- (157,4243.4) ;
\draw    (224,4180) -- (198,4243.4) ;
    
    \end{tikzpicture}

    \caption{A degenerated projection of the hypercube}
    \label{hypercube}
\end{figure}

\noindent where the four colored lines are the four 2-faces parallel to $\vect(e_1,e_2)$ that are degenerating. By slightly perturbing the blue and the red 2-face (respectively bottom and top on the drawing), we obtain a new polytope satisfying condition $(ii)$ but not condition $(i)$ for the same projection as shown on Figure \ref{perturbed hypercube} below, where the red 2-face, the blue 2-face and their adjacent 2-faces may be split into two non-coplanar triangles. We do not represent the added edge for the sake of readability of the figure.

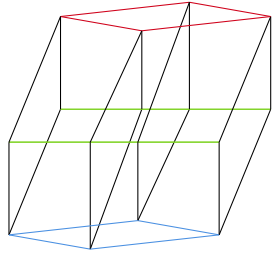
\begin{figure}[ht]
    \centering
    \begin{tikzpicture}[x=0.75pt,y=0.75pt,yscale=-1,xscale=1]
\draw    (369,4226.8) -- (343,4283) ;
\draw    (345,4226.8) -- (319,4297.4) ;
\draw    (304,4226.8) -- (278,4290.2) ;
\draw [color={rgb, 255:red, 208; green, 2; blue, 27 }  ,draw opacity=1 ]   (304,4180) -- (345,4187.2) ;
\draw [color={rgb, 255:red, 208; green, 2; blue, 27 }  ,draw opacity=1 ]   (369,4172.8) -- (410,4180) ;
\draw [color={rgb, 255:red, 208; green, 2; blue, 27 }  ,draw opacity=1 ]   (304,4180) -- (369,4172.8) ;
\draw [color={rgb, 255:red, 74; green, 144; blue, 226 }  ,draw opacity=1 ]   (278,4290.2) -- (319,4297.4) ;
\draw [color={rgb, 255:red, 74; green, 144; blue, 226 }  ,draw opacity=1 ]   (343,4283) -- (384,4290.2) ;
\draw [color={rgb, 255:red, 126; green, 211; blue, 33 }  ,draw opacity=1 ]   (304,4226.8) -- (345,4226.8) ;
\draw [color={rgb, 255:red, 126; green, 211; blue, 33 }  ,draw opacity=1 ]   (369,4226.8) -- (410,4226.8) ;
\draw [color={rgb, 255:red, 126; green, 211; blue, 33 }  ,draw opacity=1 ]   (345,4226.8) -- (410,4226.8) ;
\draw [color={rgb, 255:red, 126; green, 211; blue, 33 }  ,draw opacity=1 ]   (304,4226.8) -- (369,4226.8) ;
\draw [color={rgb, 255:red, 126; green, 211; blue, 33 }  ,draw opacity=1 ]   (278,4243.4) -- (319,4243.4) ;
\draw [color={rgb, 255:red, 126; green, 211; blue, 33 }  ,draw opacity=1 ]   (343,4243.4) -- (384,4243.4) ;
\draw [color={rgb, 255:red, 126; green, 211; blue, 33 }  ,draw opacity=1 ]   (319,4243.4) -- (384,4243.4) ;
\draw [color={rgb, 255:red, 126; green, 211; blue, 33 }  ,draw opacity=1 ]   (278,4243.4) -- (343,4243.4) ;
\draw    (278,4243.4) -- (278,4290.2) ;
\draw    (319,4243.4) -- (319,4297.4) ;
\draw    (343,4243.4) -- (343,4283) ;
\draw    (384,4243.4) -- (384,4290.2) ;
\draw    (304,4180) -- (278,4243.4) ;
\draw    (304,4180) -- (304,4226.8) ;
\draw    (345,4187.2) -- (345,4226.8) ;
\draw    (369,4172.8) -- (369,4226.8) ;
\draw    (410,4180) -- (410,4226.8) ;
\draw    (410,4226.8) -- (384,4290.2) ;
\draw    (345,4187.2) -- (319,4243.4) ;
\draw    (369,4172.8) -- (343,4243.4) ;
\draw    (410,4180) -- (384,4243.4) ;
\draw [color={rgb, 255:red, 208; green, 2; blue, 27 }  ,draw opacity=1 ]   (345,4187.2) -- (410,4180) ;
\draw [color={rgb, 255:red, 74; green, 144; blue, 226 }  ,draw opacity=1 ]   (319,4297.4) -- (384,4290.2) ;
\draw [color={rgb, 255:red, 74; green, 144; blue, 226 }  ,draw opacity=1 ]   (278,4290.2) -- (343,4283) ;

    \end{tikzpicture}
    \caption{A perturbed hypercube providing a counter-example}
    \label{perturbed hypercube}
\end{figure}

\begin{rmk}
In dimension 3 only, conditions $(i)$ and $(ii)$ are equivalent, as this case is the only one where the planes are separating.
\end{rmk}

We will abuse terminology and still say that a 2-face is degenerated for $W$ if it is sent in a line by $\pi_W$. As "being parallel to" is an equivalence relation on the set of 2-faces of $P$, we would rather say "the class of $F$" than "$F$ and its parallel 2-faces if they exist". We say that a class of 2-face is degenerating for $W$ if one of its representative degenerates (and so does every other representatives of that class).

\begin{defin}[Admissibility] \label{admissible}
A planar vector space $W$ is said to be \emph{admissible for $P$} if no class of 2-faces of $P$ is degenerating for $W$. As $W$ is completely determined by its orthogonal, we say that a space $V$ of codimension 2 is  \emph{an admissible orthogonal} if $V^\perp$ is an admissible plane.
\end{defin}

If $F$ is a $k$-face of $P$, we denote $\vect[F]$ the only $k$-dimensional sub-space parallel to $F$. With this definition of admissibility, we have that a planar vector space $W$ is admissible if and only if either $W \cap \vect[F]^\perp$ or $W^\perp \cap \vect[F]$ is reduced to $\{0\}$ (and in this case, both spaces are) for all 2-faces $F$. It is also equivalent to asking a finite number of determinants to be non-zero. This notion will be the one we will use in Definition \ref{new_def}.

\begin{rmk}
    Here, we developed two different notions of degeneration. One could wonder if these two notions would yield different definitions of equiprojectivity. The answer is no, the two possible definitions of equiprojectivity are equivalent, but the proof requires tools from later sections.
\end{rmk}

\begin{rmk}
    It is not the only possible generalisation of equiprojectivity. For instance we may consider projections on sub-spaces of constant dimension at least 3. In this case, we have to determine what is the quantity we want to preserve: it can be either the number of vertices, of edges, of facets or even the combinatorial type. This notion is out of the scope of this article.
\end{rmk}

\section{Some counting}

With Definition \ref{new_def} at hand, let us show that the observation that any 3-dimensional zonotope is equiprojective (see Proposition 2.1 in~\cite{TBLP}) carries over to higher dimensions. Let us fix the dimension $d$. 
\begin{proof}[Proof of observation and bound \eqref{borne}]
Up to translation, a zonotope is a set of the form \[Z = \sum_{g\in \mathcal G} \conv(0,g)\] where $\mathcal G$ is the set of generators of $Z$ and is assumed to be minimal, that is to say no two generators are collinear. Then, it is very easy to compute explicitly the projection of $Z$: \[\pi_W(Z) = \sum_{g\in \mathcal G} \conv(0,\pi_W(g))\] If $W$ is admissible, then it means that the $\pi_W(g)$ are pairwise non-collinear. In this case $\pi_W(Z)$ is a 2-dimensional zonotope with $2|\mathcal G|$ vertices. So $Z$ is $2|\mathcal G|$-equiprojective.

As we can build infinitely many equiprojective polytope with ease (take any prism for instance), we would rather count the number of \emph{combinatorial types} than the number of equiprojective polytopes. Two polytopes have the same combinatorial type if and only if there is a isomorphism between their face-lattice preserving the relation of inclusion.

By applying the Buffière--Pournin bound on the number of combinatorial types of $d$-zonotopes with $\frac{k}{2}$ generators when $k$ is even (see \cite{TBLP}, theorem 1.2), we get that there are at least \[ k^{\frac k2\left(d^2-2d + O_{k\to +\infty}\left( \frac{1}{\log k} \right) \right)} \] different types of $k$-equiprojective polytopes as stated in equation \eqref{borne} in the introduction.
\end{proof}

\section{Walking in the Grassmannian}

The proof of the Hasan--Lubiw characterisation in \cite{MHAL} relies on a lemma which states that one can find a continuous path between two admissible directions of projection such that at most one class of 2-faces degenerates at any time, and there are finitely many degenerations (see Lemma 2 therein). The crucial issue of considering a direction of projection (i.e. a dot on the 2-dimensional sphere) rather than the whole orthogonal of projection, is that we cannot generalise this lemma in higher dimension as our orthogonal is of dimension $d-2$. Before going further, we have to explicit some notion of arcwise connectedness in the Grassmannian.

While there are many realisations of the Grassmannian, we have to take into account the geometry of the set of projections resulting in multiple degeneracies before choosing which realisation we will use. We want to especially avoid these projections as a multiple degeneracy would make tracking the combinatorics of the projection very difficult. In dimension 3, this set is composed only of finitely many points, and we do not need such a machinery. However in higher dimensions, the geometry of this set is much more complex to study. As such, we need to work case by case to build explicit paths in order to avoid such projections. This construction is what limits our choice. For instance, if we work with Plücker coordinates or we represent the Grassmannian as a set of orthogonal projections, it may prove difficult to build a path while satisfying the the different constraints (the Plücker relations and the idempotence and symmetry respectively). Furthermore, we choose not to use the realisation of the Grassmannian as a representable functor or as Schubert cells, since these representations do not provide information about how the vector spaces interact with a given polytope. As such, the most convenient way to do this is to work with free families of vectors, even before quotienting by the relation “the two free families span the same space”. This operation of quotienting will preserve the continuity of our paths if we endow the Grassmannian with its quotient topology. Moreover, using such realisation of the Grassmannian will allow us to precisely handle the polytopes in the next section.

\subsection{The generalisation of a connectedness lemma}

Let us now state the aforementioned definition of connectedness, given here in the general case.

\begin{defin}
    Let $V_0,V_1\subset \R^d$ two vector spaces of dimension $k$. We say that $\gamma : t \in \mathopen[0,1\mathclose] \mapsto \gamma(t) \in (\R^d)^{k} $ is a \emph{continuous arc (or path) between $V_0$ and $V_1$} if: \begin{enumerate}
        \item for all $t \in \mathopen[0,1\mathclose]$, $\gamma(t)$ is a free family
        \item $\vect(\gamma(0))= V_0$ and $\vect(\gamma(1))= V_1$
        \item $\gamma$ is continuous except in a finite number of points $t_1,...,t_n$ such that $0<t_1<...<t_n<1$ and admits limits above and below at these points that satisfy $\vect(\gamma(t_i^-)) = \vect(\gamma(t_i^+)) = \vect(\gamma(t_i))$
    \end{enumerate}
    If such a map exists, we say that \emph{we can continuously go from $V_0$ to $V_1$} and we denote $V_t = \vect(\gamma(t))$.
\end{defin}

\begin{rmk}
    The first condition ensures that $\vect(\gamma(t))$ keeps the same dimension along the path. The third condition allows a finite number of changes of basis.
\end{rmk}

\begin{rmk}
    Like in the classical definition of arcwise connectedness, we can follow a path backward and we can concatenate paths. In other words, if we can go continuously from $V_0$ to $V_1$ and from $V_1$ to $V_2$, then we can go continuously from $V_1$ to $V_0$ and from $V_0$ to $V_2$.
\end{rmk}

One interesting fact with this notion is that the function $t \in \mathopen[0,1\mathclose] \mapsto \pi_{W_t}(v)$ is continuous for all $v$. Indeed, the Gram--Schmidt process is continuous, thus $t \mapsto \pi_{W_t^\perp}(v)$ is a continuous function, and so is $t \mapsto \pi_{W_t}(v)$.

Let us now focus on our case $k = d-2$. Now our goal is to move continuously between any two admissible orthogonals while having at most one degeneration at every time, as stated in Theorem \ref{informal}. Let us rephrase it with the updated terminology: 

\begin{thm}[walking in the Grassmannian]
\label{balade}
    Let $P$ be a polytope of $\R^d$. We can go continuously from any admissible orthogonal to any other admissible orthogonal such that there are only finitely many degenerations along the path and at each of them, only one class of 2-faces is degenerating.
\end{thm}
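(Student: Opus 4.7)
The plan is to exploit the algebraic structure of the degeneracy conditions in the Grassmannian $\mathrm{Gr}(d-2, d)$. Let $L_1, \ldots, L_m$ denote the two-dimensional spans $\vect[F_i]$ of representatives of the finitely many classes of parallel 2-faces of $P$. A codimension-2 subspace $V$ is admissible if and only if $V \cap L_i = \{0\}$ for every $i$, and a class $[F_i]$ degenerates at time $t$ exactly when $V_t \cap L_i \neq \{0\}$. The locus $B_i := \{V \in \mathrm{Gr}(d-2,d) : V \cap L_i \neq \{0\}\}$ is a Schubert divisor of (real) codimension one in $\mathrm{Gr}(d-2,d)$, while $B_i \cap B_j$ has codimension at least two whenever $L_i \neq L_j$. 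The goal is then to build a piecewise-polynomial path from $V_0$ to $V_1$ that meets the $B_i$ in only finitely many transverse points and avoids all $B_i \cap B_j$.

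\textbf{Elementary pieces.} I would build $\gamma$ piecewise, moving a single frame vector at a time. Starting from bases $(v_1, \ldots, v_{d-2})$ of $V_0$ and $(w_1, \ldots, w_{d-2})$ of $V_1$, chosen (by a small generic perturbation of the basis, which does not affect the spans) so that every intermediate family $(w_1, \ldots, w_{k-1}, v_k, \ldots, v_{d-2})$ is free, the $k$-th elementary piece moves $v_k$ to $w_k$ along a segment in $\R^d$ while the other $d-3$ vectors stay fixed. Along any such piece, $V_t$ varies in a one-parameter family, and the collision condition $V_t \cap L_i \neq \{0\}$ amounts to the vanishing of a determinant that is polynomial in $t$. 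For generic choices of $w_k$ these polynomials are not identically zero, so each class degenerates at only finitely many times in each piece, and the total number of degenerations along the concatenated path is finite.

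\textbf{Main obstacle: avoiding simultaneous degenerations.} The delicate point is to guarantee that degenerations occur one at a time. Two distinct classes $[F_i]$ and $[F_j]$ degenerating simultaneously would force $V_t \in B_i \cap B_j$, a set of codimension at least two in the Grassmannian, whereas $\gamma$ is one-dimensional. I would handle this with a transversality argument: the space of perturbations of each elementary piece (for instance, small deformations of the target vector $w_k$, or replacement of the linear segment by a generic analytic arc) has dimension large enough that, for almost every perturbation, the path meets each $B_i$ transversally and entirely avoids the codimension-two loci $B_i \cap B_j$ with $L_i \neq L_j$. The perturbations can be chosen small enough to preserve the admissibility of the endpoints as well as the freeness of the intermediate families from the previous paragraph.

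\textbf{Lifting to a frame.} The preceding construction gives a continuous curve in $\mathrm{Gr}(d-2, d)$, whereas the definition of continuous arc asks for a map into $(\R^d)^{d-2}$ whose image consists of free families. The elementary families chosen above are free apart from possibly finitely many instants where the moving vector lands in the span of the others; at each of these exceptional instants, the third clause of the definition of continuous arc permits a change of basis, which I use to replace the offending vector by one that preserves $V_t$ and keeps the family free. Concatenating the $d-2$ corrected elementary pieces yields the desired continuous arc from $V_0$ to $V_1$ with only finitely many degenerations, each involving a single class.
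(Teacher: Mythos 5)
Your setup is sound and matches the paper's in spirit: degeneration of the class of $F_i$ along the path is indeed the condition $V_t\cap L_i\neq\{0\}$, the loci $B_i$ are hypersurfaces, and the strategy of moving one frame vector at a time with affine/polynomial degeneracy conditions is essentially what the paper does. The gap is in the transversality step. When the $k$-th piece moves only $v_k$ and keeps the other $d-3$ vectors fixed, every subspace along that piece — and every subspace reachable by perturbing $w_k$ or by replacing the segment with any arc of the moving vector — contains the fixed $(d-3)$-plane $H_k=\vect(w_1,\dots,w_{k-1},v_{k+1},\dots,v_{d-2})$. So your perturbation family does not sweep an open set of the Grassmannian: it is confined to the pencil of $(d-2)$-planes containing $H_k$, which is a projective plane. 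Inside that pencil, $B_i$ restricts to a projective line (assuming $H_k\cap L_i=\{0\}$, which itself needs a genericity argument you do not make), and the two lines $B_i$ and $B_j$ \emph{coincide} exactly when $\vect(H_k,L_i)=\vect(H_k,L_j)$, which can happen for distinct $L_i\neq L_j$. In that situation the ``codimension-two'' locus $B_i\cap B_j$ meets your constrained family in a full line, every admissible choice of path within the piece crosses it, and no perturbation of the moving vector (small or generic) can separate the two degenerations. This is precisely the case the paper spends most of Lemma~\ref{step_2} on: it detects the coincidence via the equality of the linear forms $\varphi_{F,F'}$ and $\varphi_{F',F}$, and resolves it by perturbing one of the \emph{fixed} vectors ($u_2$), i.e.\ by moving $H_k$ itself, before perturbing $v$.

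To repair your argument you would need one of two additions: either (a) incorporate perturbations of the non-moving frame vectors into each elementary piece (while checking that admissibility and the endpoint spans are preserved, as the paper does), or (b) prove that the bases of $V_0$ and $V_1$ can be chosen so that for every $k$ and every pair $i\neq j$ one has $\vect(H_k,L_i)\neq\vect(H_k,L_j)$ and $H_k\cap L_i=\{0\}$. Neither is automatic from ``the space of perturbations has dimension large enough''; that sentence is exactly the assertion that fails for the perturbations you name. Note also that the paper's example of polytopes with arbitrarily many estranged 2-faces degenerating simultaneously shows these coincidences are not pathological, so the issue cannot be dismissed as measure-zero without identifying the correct parameter space.
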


\begin{rmk}
    Working on the orthogonal of projection is more convenient than working on the projection plane. Indeed, with the orthogonal, we want to avoid spaces of dimension 1 or 2, which we can handle with ease.
\end{rmk}

We divide the proof into lemmas in order to make it resemble an usual proof of pathwise connectedness: we will choose an orthogonal of reference $W_r^\perp$, and then we will go from any admissible orthogonal to $W_r^\perp$. Before we begin the proof of Theorem~\ref{balade}, we need one more geometric notion:

\begin{defin}
    Let $F$ and $F'$ be two non-parallel 2-faces of $P$. Then
    \begin{enumerate}
        \item either $\vect[F]\cap \vect[F'] = \{0\}$, and we say that $F$ and $F'$ are \emph{estranged},
        \item or $\vect[F]\cap \vect[F'] \neq \{0\}$, and this intersection is a vector line spanned by some vector called a \emph{proscribed direction}.
    \end{enumerate}
\end{defin}

\begin{rmk}
    \begin{itemize}
        \item Every 2-face $F$ is such that $\vect[F]$ contains at least one proscribed direction, as edge directions are a special case of proscribed direction. Furthermore, each polytope admits finitely many proscribed directions.
\begin{figure}[ht]
    \centering
    \begin{tikzpicture}[x=0.75pt,y=0.75pt,yscale=-1,xscale=1]

\draw    (359,770.2) -- (404,733.6) ;
\draw    (280,770) -- (359,770.2) ;
\draw    (294,598.2) -- (404,733.6) ;
\draw  [dash pattern={on 4.5pt off 4.5pt}]  (317,714.2) -- (404,733.6) ;
\draw    (294,598.2) -- (359,770.2) ;
\draw  [dash pattern={on 4.5pt off 4.5pt}]  (219,733.8) -- (317,714.2) ;
\draw    (219,733.8) -- (280,770) ;
\draw    (294,598.2) -- (280,770) ;
\draw    (219,733.8) -- (294,598.2) ;
\draw  [dash pattern={on 4.5pt off 4.5pt}]  (294,598.2) -- (317,714.2) ;
\draw  [dash pattern={on 4.5pt off 4.5pt}]  (40,769.8) -- (219,733.8) ;
\draw  [dash pattern={on 4.5pt off 4.5pt}]  (40,769.8) -- (280,770) ;
\draw [color={rgb, 255:red, 74; green, 144; blue, 226 }  ,draw opacity=1 ][fill={rgb, 255:red, 74; green, 144; blue, 226 }  ,fill opacity=1 ]   (322,579.6) -- (15,786.6) ;

\draw (92.29,708.32) node [anchor=north west][inner sep=0.75pt]  [rotate=-325.72] [align=left] {proscribed direction};

\end{tikzpicture}
    \caption{An example of proscribed direction which is not an edge direction}
    \label{fig:dir_proscrite}
\end{figure}
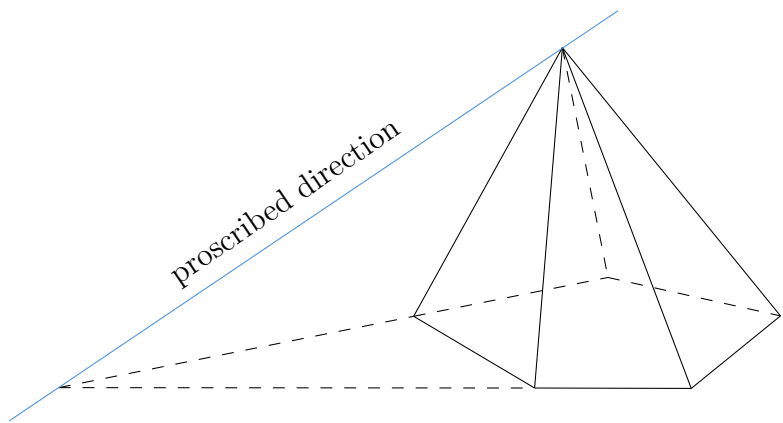
        \item A proscribed direction may not be an edge direction, even in dimension 3 as shown in figure \ref{fig:dir_proscrite}.
        \item These directions are crucial to consider since if $W^\perp$ contains at least one of them, we are sure that several classes are degenerating simultaneously.
        \item In dimension 3, it is enough to avoid all proscribed directions to prove Theorem \ref{balade}, but it is not the case when $d\geq 4$, as we can find example of polytopes with as many pairwise estranged 2-faces degenerating simultaneously as we want (see the next sub-section for an explicit construction).
    \end{itemize}
\end{rmk}

\begin{lemma}[choosing the orthogonal of reference]
\label{step_1}
    By applying an isometry of $\R^d$ to the polytope $P$, we can get the following properties:
    \begin{itemize}
        \item No proscribed direction belongs to $e_1^\perp$. In this case, for each 2-faces $F$, $\vect[F]\cap e_1^\perp$ is a line spanned by some non-zero vector $\eta_F$.
        \item  The last coordinate of every $\eta_F$ is non-zero. We will assume that $\eta_{F,d}=1$
    \end{itemize}
\end{lemma}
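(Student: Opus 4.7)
The plan is to find an isometry $R$ of $\R^d$ and apply it to $P$. Writing $u_i := R^{-1}e_i$, the task reduces to choosing an orthonormal pair $(u_1,u_d)$ in $\R^d$ such that (a) $u_1$ is not orthogonal to any proscribed direction of $P$, and (b) for every 2-face $F$ of $P$, the line $\vect[F]\cap u_1^\perp$ is not contained in $u_d^\perp$. Both requirements are finitely many open conditions, since $P$ has finitely many 2-faces (hence finitely many proscribed directions), so I will obtain $u_1$ and $u_d$ by a generic position argument on the relevant spheres.

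For (a), let $v_1,\ldots,v_N$ be generators of the finitely many proscribed directions. Each set $\{u \in S^{d-1} : u\cdot v_i = 0\}$ is a great subsphere of codimension $1$, so the union is nowhere dense and I pick $u_1$ in the complement. This choice forces $\dim(\vect[F]\cap u_1^\perp)\leq 1$ for every 2-face $F$, because $\vect[F]$ always contains at least one $v_i$ (any edge direction of $F$ will do, since edge directions are proscribed) and that $v_i$ does not lie in $u_1^\perp$. The Grassmann formula gives $\dim(\vect[F]\cap u_1^\perp)\geq 2 + (d-1) - d = 1$, so the intersection is exactly a line $L_F = \R\zeta_F$.

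For (b), I then choose $u_d$ in the unit sphere of $u_1^\perp$ avoiding the finitely many hyperplanes $\zeta_F^\perp \cap u_1^\perp$; the same density argument applies. Completing $(u_1,u_d)$ into an orthonormal basis of $\R^d$ and letting $R$ be the isometry sending this basis onto the canonical one, I apply $R$ to $P$ and observe the announced properties: for each 2-face $F$ of $RP$, the intersection $\vect[F]\cap e_1^\perp$ equals $R(L_F)$, which is a line, and the $d$-th coordinate of $R\zeta_F$ equals $\zeta_F \cdot u_d \neq 0$. A final rescaling of this generator enforces $\eta_{F,d}=1$.

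I do not anticipate any real obstacle here: this is a clean generic position argument made possible by the finiteness of the set of proscribed directions and of the set of 2-faces of $P$. The only subtlety is the order of choices, namely that $u_1$ must be fixed first (since the lines $L_F$ depend on it) before $u_d$ is chosen orthogonal to $u_1$ so as to avoid the orthogonals of the $L_F$, so that the two requirements remain independent.
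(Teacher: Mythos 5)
Your proof is correct, and it rests on the same two pillars as the paper's argument: the finiteness of the set of proscribed directions and of the set of 2-faces, and the fact that the required conditions are generic. The execution differs, though. The paper proceeds by iterated perturbation: starting from the given position of $P$, each unsatisfying proscribed direction is repaired by a small rotation in a coordinate 2-plane, with the angle chosen small enough not to spoil the directions that already satisfy the condition, and the second bullet is handled by the same mechanism. You instead build the isometry in one shot, choosing $u_1=R^{-1}e_1$ on the sphere outside the finitely many great subspheres orthogonal to the proscribed directions, and only then $u_d=R^{-1}e_d$ in the unit sphere of $u_1^\perp$ outside the finitely many hyperplanes $\zeta_F^\perp\cap u_1^\perp$. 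Your route has the advantage of making the order of choices explicit (the lines $L_F$ depend on $u_1$, so $u_1$ must be fixed first) and of dispensing with the argument that each successive perturbation preserves the conditions already achieved; the Grassmann count $2+(d-1)-d=1$, combined with the fact that every $\vect[F]$ contains a proscribed direction because edge directions are proscribed, correctly pins down $\vect[F]\cap u_1^\perp$ as exactly a line. Both approaches deliver the same statement with essentially the same amount of work.
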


\begin{proof}
    When it comes to the proscribed directions, if one of them is unsatisfying, we can apply a rotation modifying only the first coordinate and one of its non-zero coordinate. If the angle of rotation is small enough, this isometry acts like a perturbation of the polytope, and of its proscribed directions. As there are only finitely many such directions, we can choose an angle such that the perturbing isometry will not turn any satisfying proscribed direction into an unsatisfying one. We can use the same process to deal with the last coordinate of the vector $\eta_F$.
\end{proof}

\begin{rmk}
    When the isometry of Lemma \ref{step_1} is applied, there is a one-to-one correspondence between the set of all classes of 2-faces, and the set of all $\eta_F$.
\end{rmk}

Under the conditions of Lemma \ref{step_1}, we can show that $W_r = \vect(e_1,e_d)$ is admissible using the characterization of admissibility in terms of intersections and the fact that the family composed of $\eta_F$ and any proscribed direction of $F$ forms a basis of $\vect[F]$.

In order to go from any admissible orthogonal to $W_r^\perp$, we split the path into two parts. The first one is given by the following lemma:

\begin{lemma}
\label{step_2}
Let $P$ be a polytope satisfying the properties of Lemma \ref{step_1}, then we can go continuously from any admissible orthogonal to another included in $e_1^\perp$ while satisfying the properties of Theorem \ref{balade}.
\end{lemma}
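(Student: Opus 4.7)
The plan is to construct an explicit linear-interpolation path from $V_0$ to an admissible $(d-2)$-dimensional subspace of $e_1^\perp$, and then invoke a genericity argument to ensure that at each degeneration only one class of 2-faces degenerates.

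First, I would show that $V_r := \vect(e_2, \ldots, e_{d-1})$ is an admissible orthogonal. Indeed, any vector of $V_r \cap \vect[F]$ lies in $\vect[F] \cap e_1^\perp = \vect(\eta_F)$ by Lemma \ref{step_1}, and also in $e_d^\perp$ since $V_r \subset e_d^\perp$; but the assumption $\eta_{F,d} = 1$ forces $\vect(\eta_F) \cap e_d^\perp = \{0\}$, so $V_r \cap \vect[F] = \{0\}$. Given an arbitrary basis $(v_1, \ldots, v_{d-2})$ of $V_0$ and a basis $(\eta_1, \ldots, \eta_{d-2})$ of a target admissible $V_1 \subset e_1^\perp$ close to $V_r$, set
\[
\gamma(t) = \bigl((1-t)v_i + t\eta_i\bigr)_{1 \leq i \leq d-2}, \qquad V(t) = \vect(\gamma(t)).
\]

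For each class of 2-face $F$ with $\vect[F] = \vect(f_1, f_2)$, the condition $V(t) \cap \vect[F] \neq \{0\}$ amounts to the vanishing of the polynomial $P_F(t) := \det(\gamma(t), f_1, f_2)$, which is of degree at most $d-2$ in $t$. Since $V_0$ and $V_1$ are admissible, both $P_F(0)$ and $P_F(1)$ are non-zero, so $P_F$ has only finitely many roots in $[0,1]$; summing over the finitely many classes of 2-faces produces finitely many times at which a degeneration can occur along the path. The freeness of $\gamma(t)$ at every $t \in [0,1]$ follows from the non-vanishing of some $(d-2)\times(d-2)$ minor of its matrix, which is an open dense condition on the target $V_1$ (the rank-dropping locus being of strict codimension in the ambient matrix space).

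The main obstacle is arranging that at each degeneration only one class of 2-faces degenerates, i.e.\ that for any two distinct classes $F \neq F'$, the polynomials $P_F$ and $P_{F'}$ share no root in $[0,1]$. This coincidence is controlled by the vanishing of their resultant, which is an algebraic condition on the target basis $(\eta_1, \ldots, \eta_{d-2})$ and hence defines a proper closed subvariety in the space of bases close to $(e_2, \ldots, e_{d-1})$. Since there are only finitely many pairs $(F, F')$ and admissibility is itself an open condition on the Grassmannian, I would conclude by selecting $V_1$ as a generic small perturbation of $V_r$ inside $e_1^\perp$ avoiding all these coincidences; the interpolation $\gamma$ associated to such a $V_1$ provides the desired path.
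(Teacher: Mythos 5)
Your strategy is genuinely different from the paper's: you attempt a single linear interpolation of bases from $V_0$ to a generic target $V_1\subset e_1^\perp$ near $\vect(e_2,\dots,e_{d-1})$ and then argue by genericity, whereas the paper moves one basis vector at a time (first pushing $u_1$ into $e_1^\perp$, keeping $u_2,\dots,u_{d-2}$ fixed) so that each $D_F$ is \emph{affine} in $t$, and then resolves simultaneous degenerations by explicit, case-dependent perturbations of $v$ and of $u_2$. Your verification that $\vect(e_2,\dots,e_{d-1})$ is admissible is correct and matches the paper's remark. However, the proposal has a genuine gap exactly at the step that carries all the difficulty: you assert that the vanishing of $\mathrm{Res}_t(P_F,P_{F'})$ "defines a proper closed subvariety" of the space of target bases, but you give no argument that this subvariety is proper, i.e.\ that the resultant is not identically zero as a function of $(\eta_1,\dots,\eta_{d-2})$. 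That properness is the entire content of the lemma. The paper's proof shows that in the closely related one-vector family the analogous claim genuinely fails in some configurations: when $\vect(u_2,\dots,u_{d-2},f_1,f_2)=\vect(u_2,\dots,u_{d-2},f'_1,f'_2)$ one has $\varphi_{F,F'}=\varphi_{F',F}$ identically, so \emph{no} choice of the free parameter separates the two degeneration times, and a second, independent perturbation is required. So "it is an algebraic condition, hence generically avoided" is not a legitimate inference here; one must actually exhibit a target for which the condition fails, or prove a transversality statement.

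The same coarseness affects your freeness claim: you invoke "strict codimension of the rank-dropping locus", but a condition of codimension $1$ cannot be avoided by a generic path joining two prescribed endpoints -- indeed the single-degeneration locus $\{V:\ V\cap\vect[F]\neq\{0\}\}$ also has strict codimension, and the path typically \emph{must} cross it. What you actually need, and do not establish, is a codimension count that survives the union over the one-parameter family $t\in[0,1]$: the rank-dropping locus for $d\times(d-2)$ matrices has codimension $3$, and for non-parallel classes the locus $\{V:\ V\cap\vect[F]\neq\{0\}\ \text{and}\ V\cap\vect[F']\neq\{0\}\}$ is an intersection of two distinct irreducible Schubert divisors, hence of codimension at least $2$ in the Grassmannian; one must then check that for each fixed $t\in(0,1]$ the map $\eta\mapsto\vect(\gamma(t))$ submerses onto the open set of $(d-2)$-planes not contained in $e_1^\perp$, so that pulling back preserves codimension and the union over $t$ still has positive codimension. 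With that argument supplied, your route closes and is arguably cleaner than the paper's case analysis (and nearly subsumes Lemma~\ref{step_3} as well); as written, the decisive step is asserted rather than proved.
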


\begin{proof}
    Let $W_0^\perp$ be an admissible orthogonal which is not in $e_1^\perp$. Let $(u_1,...,u_{d-2})$ be a basis of $W_0^\perp$. By Grassmann's formula, we can suppose that $u_2,...,u_{d-2} \in e_1^\perp$, but not $u_1$: we only have to send the latter into $e_1^\perp$. Let $v$ in \[ \mathcal V = \R^d\backslash \left[ e_1^\perp \cup \bigcup_{F~2-face} \vect(W_0^\perp, \eta_F) \right] \] This set is the complementary of a zero-measure set and so is non-empty. Also, it is stable under multiplication by a non-zero scalar, so we can suppose that $u_1-v \in e_1^\perp$ up to rescaling $v$. We consider the path:
    \begin{equation}
    \label{path_1}
        t\in \mathopen[0,1\mathclose] \mapsto (u_1-tv,u_2,...,u_{d-2})
    \end{equation} which is a valid path going continuously from $W_0^\perp$ to some $W_1^\perp\subset e_1^\perp$. We have to check if $W_1^\perp$ is admissible. For any $v\in \mathcal V$, if $W_1^\perp \cap \vect[F] \neq \{0\}$ for some 2-face $F$, then $\mu \eta_F = \lambda_1 (u_1-v) + \lambda_2 u_2 + ... + \lambda_{d-2} u_{d-2}$. As $W_0$ is admissible, we have that $\lambda_1 \neq 0$ and so $v\in \vect(W_0^\perp,\eta_F)$. It is a contradiction, thus $W_1^\perp$ is admissible.
    
    We have to be sure that the other conditions of Theorem \ref{balade} are satisfied. Let $F$ be a 2-face of $P$ and $(f_1,f_2)$ be a basis of $\vect[F]$. We are interested in the number of roots of the function \[ D_F : t\in \mathopen[0,1\mathclose] \mapsto \det(u_1-tv,u_2,..,u_{d-2},f_1,f_2) \] as $D_F(t_0) = 0$ is equivalent to the degeneration of the 2-face $F$ for the projection along $\vect(u_1-t_0v,u_2,...,u_{d-2})$. $D_F$ is an affine function with a non-zero ordinate at the origin by hypothesis on $W_0$. So it admits at most one zero in $\mathopen [0,1  \mathclose]$, whatever the choice of $v$. 
    
    We now need to study what happens when (at least) two different classes are degenerating simultaneously, and how to solve this issue. Let $F$ and $F'$ be two non-parallel 2-faces in that case. We can express their time of degeneration by solving the equations $D_F(t)=0$ and $D_{F'}(t)=0$ for $t$. The solutions coincide when \begin{align*}
        \det(v,u_2,...,u_{d-2},f_1,f_2)&\det(u_1,...,u_{d-2},f'_1,f_2') =\\
        & \det(v,u_2,...,u_{d-2},f'_1,f'_2)\det(u_1,...,u_{d-2},f_1,f_2).
     \end{align*} Both sides are defining a linear form in $v$, we denote $\varphi_{F,F'}(v)$ the left hand-side and $\varphi_{F',F}(v)$ the right hand-side. We want to find a $v\in \mathcal{V}$ such that $\varphi_{F,F'}(v) \neq \varphi_{F',F}(v)$.
     
     Let us find a condition under which such vector $v$ exists. If $\varphi_{F,F'}=\varphi_{F',F}$, then they have the same kernel, i.e. $\vect(u_2,..,u_{d-2},f_1,f_2) = \vect(u_2,..,u_{d-2},f'_1,f'_2)$. Reciprocally, if the two spaces are the same, then $\varphi_{F,F'}$ and $\varphi_{F',F}$ are proportional. Evaluating the relation $\varphi_{F,F'}= \lambda \varphi_{F',F}$ in $u_1$ yields $\lambda = 1$, and thus $\varphi_{F,F'}=\varphi_{F',F}$.

     So, if $\vect(u_2,..,u_{d-2},f_1,f_2) \neq \vect(u_2,..,u_{d-2},f'_1,f'_2)$ then it is possible to perturb $v$ so that the perturbation still lies in $\mathcal{V}$ (as it is an opened set) in order to remove (at least) one simultaneous degeneration while not adding new ones, as the zeros of $D_F$ are continuous in $v$. In dimension 3, under the hypothesis of $F$ and $F'$ being non-parallel, this condition of existence of the perturbation of $v$ is always valid. So let us suppose the dimension $d$ to be strictly greater than 3. If the two spaces are the same, then we perturb $u_2$ as follows. We have in particular that $\det(u_2,..,u_{d-2},f_1,f_2,f'_1) = 0$. Let $w\not \in \vect(u_3,..,u_{d-2},f_1,f_2,f'_1)$, then $\det(u_2+\varepsilon w,..,u_{d-2},f_1,f_2,f'_1) \neq 0$ and, up to translating $w$ by a proscribed direction in $F$, we can suppose that $w\in e_1^\perp$. As the zeros of the functions $D_F$ are continuous in $u_2$, it is possible to find an $\varepsilon>0$ small enough such that we do not add new simultaneous degenerations. Furthermore, since being admissible is an opened condition on any basis of $W_0^\perp$, up to reducing yet again $\varepsilon$, the path \[ \gamma : s\in \mathopen[0,\varepsilon\mathclose]\mapsto (u_1,u_2+sw,u_3,...,u_{d-2}) \] is such that $\vect(\gamma(s))$ is admissible for all $s$. This perturbation of $u_2$ may not solve the simultaneous degeneration of $F$ and $F'$. But in this case, we can guarantee that $\varphi_{F,F'} \neq \varphi_{F',F}$ (beware that the expression of the functions depends on the orthogonal of projection!) This ensure the existence of a perturbation of $v$ solving this simultaneous degeneration. 
     
     As there are only finitely many times of degeneration (at most one per class of 2-faces), we can iterate this process of perturbation as long as there are simultaneous degenerations, and by concatenating all the finitely many needed $\gamma$ and a path of the same kind as equation \eqref{path_1}, we get the wanted path.
\end{proof}

Thanks to this lemma, we can now focus only on the admissible orthogonals that are included in $e_1^\perp$. Under the properties of Lemma \ref{step_1}, it will be more convenient now as we will only deal with hyperplanes of $e_1^\perp$ and the degenerations are caused by the lines spanned by the vectors $\eta_F$. Furthermore, a path in $e_1^\perp$ is enough since $W_r^\perp$ is included in $e_1^\perp$.

\begin{lemma}
\label{step_3}
    Let $P$ be a polytope satisfying the properties of Lemma \ref{step_1}, then we can go continuously from any admissible orthogonal in $e_1^\perp$ to $W_r^\perp$ while satisfying the properties of Theorem \ref{balade}.
\end{lemma}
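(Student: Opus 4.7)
The plan is to parametrize admissible orthogonals contained in $e_1^\perp$ by their one-dimensional normals inside $e_1^\perp$: a hyperplane $W^\perp\subset e_1^\perp$ of dimension $d-2$ is admissible if and only if any vector $n\in e_1^\perp\setminus\{0\}$ spanning its orthogonal in $e_1^\perp$ satisfies $n\cdot\eta_F\neq 0$ for every class of 2-faces $F$. In this picture $W_r^\perp$ corresponds to $n = e_d$, so the task reduces to connecting any admissible $n_0$ to $e_d$ through non-zero vectors in $e_1^\perp$ while controlling degenerations, and then lifting this to a continuous family of bases.

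For the path in the normal space I would use a broken-line $n_0\to p\to e_d$ in $e_1^\perp\setminus\{0\}$ for a carefully chosen intermediate point $p$. On any straight-line segment $t\mapsto (1-t)a+tb$ with $a,b$ both admissible, the function $t\mapsto\bigl((1-t)a+tb\bigr)\cdot\eta_F$ is affine and non-vanishing at both endpoints, so each class degenerates at most once per segment. To avoid two distinct classes $F\neq F'$ degenerating simultaneously, a short calculation translates $t_F=t_{F'}$ into the linear constraint
\[
b\cdot\bigl[(a\cdot\eta_F)\,\eta_{F'}-(a\cdot\eta_{F'})\,\eta_F\bigr]=0
\]
on $b$. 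The bracketed vector is non-zero: by Lemma \ref{step_1} and the remark that follows it, distinct classes yield distinct $\eta_F$'s all sharing last coordinate $1$, hence $\eta_F$ and $\eta_{F'}$ are non-collinear; admissibility of $a$ then gives $a\cdot\eta_F,\,a\cdot\eta_{F'}\neq 0$, so no non-trivial combination vanishes. Collecting this hyperplane condition over all pairs of classes and over both segments $[n_0,p]$ and $[p,e_d]$, together with the finitely many codimension-one conditions that keep $p$ itself admissible and keep the two segments away from the origin, gives a finite union of hyperplanes in $e_1^\perp$; since $\dim e_1^\perp=d-1\geq 2$, its complement is open and dense, so a valid $p$ exists.

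To turn this path of normals into a continuous arc $\gamma(t)\in(\R^d)^{d-2}$ in the sense of the earlier definition, I would project the reference basis $(e_2,\ldots,e_{d-1})$ of $W_r^\perp$ onto $n(t)^\perp\cap e_1^\perp$ at each time. These projections form a basis of the hyperplane whenever $n(t)\notin\vect(e_2,\ldots,e_{d-1})$, i.e.\ when the last coordinate $n(t)_d$ is non-zero; along our piecewise-affine path, this fails at only finitely many times. At each of those exceptional times I would switch to a different reference subset of $\{e_2,\ldots,e_d\}$ and project again. The definition of a continuous arc explicitly tolerates finitely many such changes of basis (the span before, at, and after each exceptional time is the same), so the concatenated family is a valid continuous arc.

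The main obstacle is the simultaneous-degeneration condition, which is what forces the introduction of the intermediate point $p$. What makes it tractable is exactly the one-to-one correspondence between classes of 2-faces and the vectors $\eta_F$ established by Lemma \ref{step_1}: it forces distinct $\eta_F$'s to be non-collinear, which turns the locus of ``bad'' $p$'s into a proper union of hyperplanes rather than the whole ambient space, and lets the dimensional bound $d-1\geq 2$ do the final work of producing an admissible intermediate point.
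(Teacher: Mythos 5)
Your proof is correct, and it reaches the same destination by a noticeably different route. The paper first normalises a basis of $W_0^\perp$ into the matrix form $\Gamma(x)$ (Gaussian elimination plus small perturbation paths to push the free row to the bottom), runs the straight-line path $t\mapsto\Gamma((1-t)x)$, and then removes simultaneous degenerations \emph{iteratively}: each time two classes collide it perturbs $x$ by a small $\varepsilon\tilde x$ chosen so that $\varphi_F(\tilde x)\neq\varphi_{F'}(\tilde x)$, arguing by continuity of the roots that no new collisions appear. You instead dualise to the normal vector $n\in e_1^\perp$ (which is legitimate: for $W^\perp\subset e_1^\perp$ one has $W^\perp\cap\vect[F]=W^\perp\cap\R\eta_F$, so admissibility is exactly $n\cdot\eta_F\neq 0$, and the paper's $D_F$ is precisely $n(t)\cdot\eta_F$ up to sign), and you replace the iteration by a single global genericity argument: the bad intermediate points $p$ form a finite union of hyperplanes, non-trivial because distinct classes give non-collinear $\eta_F$'s with last coordinate $1$ and because the fixed endpoints $n_0$ and $e_d$ are admissible. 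This buys a cleaner, non-iterative argument and dispenses with the normal-form reduction; the price is that you must lift the path of normals back to a path of bases, which the paper gets for free from its matrix parametrisation, and your basis-switching device is exactly what the definition of a continuous arc was designed to permit. Two small points you should make explicit: add $p_d\neq 0$ to your list of hyperplane conditions, so that $t\mapsto n(t)_d$ is a non-zero affine function on each segment and the exceptional times for your reference frame are genuinely finite in number (on the segment $[p,e_d]$ this is automatic since $(e_d)_d=1$, but on $[n_0,p]$ it could fail if $n_{0,d}=p_d=0$); and note that the segment $[n_0,p]$ avoids the origin unless $p$ lies on the line $\R n_0$, which is again a proper closed condition absorbed by your genericity argument since $\dim e_1^\perp=d-1\geq 2$.
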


\begin{proof}
    Let $W_0^\perp\subset e_1^\perp$ be an admissible orthogonal which is not $W_r^\perp$. By Gaussian elimination, it admits a basis of the form \[ \begin{pmatrix}
        1& & & & & \\
         &\ddots& & &(0)& \\
        (0)& &1& & & \\
        x_2&...&x_{i-1}& & & \\
         & & &1& & \\
         &(0)& & &\ddots& \\
         & & & & &1
    \end{pmatrix} \] where each column represent a vector of the basis written in $(e_2,...,e_d)$. By the same argument as the perturbation $\gamma$ in the proof of Lemma \ref{step_2}, we can continuously go from $W_0^\perp$ to some perturbation $\tilde W_0^\perp$ while staying admissible along all the path, where $\tilde W_0^\perp$ has a basis \[ \begin{pmatrix}
        1& & & & & \\
         &\ddots& & &(0)& \\
        (0)& &1& & & \\
        x_2&...&x_{i-1}&\varepsilon& & \\
         & & &1& & \\
         &(0)& & &\ddots& \\
         & & & & &1
    \end{pmatrix} \] By Gaussian elimination, this operation allow the line of $x_j$ to go down one row. Iterating this process allows us to only consider basis looking like \[ \Gamma(x) := \begin{pmatrix}
        1& &(0) \\
         &\ddots& \\
        (0)& &1\\
        x_2&...&x_{d-1}\\
    \end{pmatrix} \]
    Then, we consider the path \[ t \in \mathopen [0,1  \mathclose] \mapsto \begin{pmatrix}
        1& &(0) \\
         &\ddots& \\
        (0)& &1\\
        (1-t)x_2&...&(1-t)x_{d-1}\\
    \end{pmatrix} \left(= \Gamma((1-t)x)\right) \] and we want to study the roots of the finitely many functions \[ D_F : t \in \mathopen [0,1  \mathclose] \mapsto \begin{vmatrix}
        1& &(0) &\eta_{F,2} \\
         &\ddots& & \vdots \\
        (0)& &1 & \eta_{F,d-1} \\
        (1-t)x_2&...&(1-t)x_{d-1} &1\\
    \end{vmatrix} \] After computation, we get that $\displaystyle D_F(t) = (1-t)\sum_{i=2}^{d-1}\eta_{F,i}x_i +1 $. They are affine functions and $D_F(1) = 1$ so they are non-zero. They admit at most one root in $\mathopen [0,1  \mathclose]$, and so there are only finitely many degenerations along this kind of path.
    
    If two non-parallel 2-faces are degenerating simultaneously, then \[ \varphi_F(x) = \sum_{i=2}^{d-1}\eta_{F,i}x_i = \sum_{i=2}^{d-1}\eta_{F',i}x_i = \varphi_{F'}(x) \] Where $\varphi_F$ and $\varphi_{F'}$ are linear forms in $x\in \R^{d-2}$. As before, we perturb $(x_i)_i$ in order to break this equality. We only need to find an $(\tilde x_i)_i$ such that $\varphi_F(\tilde x) \neq \varphi_{F'} (\tilde x)$, and then, with an $\varepsilon>0$ small enough, we are able to go continuously from $\vect(\Gamma(x))$ to $\vect(\Gamma(x+\varepsilon \tilde x))$ while being sure we do not add any new simultaneous degeneration. If there is no such $\tilde x$, then $\varphi_F = \varphi_{F'}$. Evaluating this relation in the canonical basis of $\R^{d-2}$ yields that $\eta_{F,i} = \eta_{F',i}$ for all coordinate. Thus $\eta_F = \eta_{F'}$, which is a contradiction. So there exists an $\tilde x$ such that $\varphi_F(\tilde x) \neq \varphi_{F'} (\tilde x)$.
    
    Then, by concatenating the finitely many needed perturbations and the path $t\mapsto \Gamma((1-t)x)$ for some $x$, we get the path we wanted.
\end{proof}

With these lemmas, we can now prove Theorem \ref{balade}:

\begin{proof}[Proof of Theorem \ref{balade}]
    Denote $f$ the isometry of Lemma \ref{step_1}. If $W_0^\perp$ is admissible for $P$, then $f(W_0^\perp)$ is admissible for $f(P)$. By concatenation of the paths from Lemmas~\ref{step_2} and \ref{step_3}, we get a path $\gamma$ going continuously from $f(W_0^\perp)$ to $W_r^\perp$ satisfying the properties we want. Then $f^{-1}\circ\gamma$ is a continuous path going from $W_0^\perp$ to $f^{-1}(W_r^\perp)$ which is also admissible and does not depend on $W_0^\perp$. Then by an argument of arcwise connectedness, we get the desired result.
\end{proof}

The main consequence of this theorem is that we only need to study what happens when one and only one class of 2-faces is degenerating.

\subsection{A polytope with many degeneracies}

Let us turn our attention to the announced example of a polytope admitting arbitrarily many estranged 2-faces that can degenerate simultaneously. We focus first on the 4-dimensional case, as the higher-dimensional examples can be built by recursion on the dimension.

Let $n$ be an integer greater than 2. In order to build a 4-polytope with at least $n$ estranged 2-faces degenerating simultaneously, let us consider the family of planes \[ F_\theta = \vect \left( f_{1,\theta} = \begin{pmatrix}
    \cos \theta \\ \sin \theta \\ \cos \theta \\ \sin \theta
\end{pmatrix} ;f_{2,\theta} = \begin{pmatrix}
    \cos \theta \\ \sin \theta \\ -\sin \theta \\ \cos \theta
\end{pmatrix} \right) \] where $\theta$ belongs to $\left[ -\frac{\pi}{4},\frac{\pi}{4} \right]$. They form a family of pairwise estranged planes and their orthogonal projection on $W = \vect(e_1,e_2)$ are lines.

Let $\varepsilon>0$, the family $Y_\theta = (f_{1,\theta}, (1-\varepsilon)f_{2,\theta}, (1+\varepsilon)f_{2,\theta})$ describes a triangle contained in $F_\theta$ which image by the projection on $W$ is a segment of length $2\varepsilon$ of the same orientation as the ray of angle $\theta$ with respect to $e_1$ and centered on the point $(\cos \theta, \sin \theta)$ (see figure \ref{Y_theta} below).

\begin{figure}[ht]
    \centering
    \begin{tikzpicture}[x=0.75pt,y=0.75pt,yscale=-1,xscale=1]

\draw [line width=1.5]    (123,1285.8) -- (123,1078.2) ;
\draw [shift={(123,1075.2)}, rotate = 90] [color={rgb, 255:red, 0; green, 0; blue, 0 }  ][line width=1.5]    (14.21,-4.28) .. controls (9.04,-1.82) and (4.3,-0.39) .. (0,0) .. controls (4.3,0.39) and (9.04,1.82) .. (14.21,4.28)   ;
\draw [line width=1.5]    (123,1185.3) -- (230,1185.2) ;
\draw [shift={(233,1185.2)}, rotate = 179.95] [color={rgb, 255:red, 0; green, 0; blue, 0 }  ][line width=1.5]    (14.21,-4.28) .. controls (9.04,-1.82) and (4.3,-0.39) .. (0,0) .. controls (4.3,0.39) and (9.04,1.82) .. (14.21,4.28)   ;
\draw   (123,1275.6) .. controls (169.94,1275.6) and (208,1235.17) .. (208,1185.3) .. controls (208,1135.43) and (169.94,1095) .. (123,1095) -- cycle ;
\draw  [dash pattern={on 4.5pt off 4.5pt}]  (213,1095) -- (123,1185.3) ;
\draw  [dash pattern={on 4.5pt off 4.5pt}]  (123,1185.3) -- (213,1275) ;
\draw [color={rgb, 255:red, 74; green, 144; blue, 226 }  ,draw opacity=1 ][fill={rgb, 255:red, 74; green, 144; blue, 226 }  ,fill opacity=1 ]   (190.5,1161.68) -- (217,1151.73) ;
\draw [shift={(217,1151.73)}, rotate = 339.42] [color={rgb, 255:red, 74; green, 144; blue, 226 }  ,draw opacity=1 ][fill={rgb, 255:red, 74; green, 144; blue, 226 }  ,fill opacity=1 ][line width=0.75]      (0, 0) circle [x radius= 3.35, y radius= 3.35]   ;
\draw [shift={(190.5,1161.68)}, rotate = 339.42] [color={rgb, 255:red, 74; green, 144; blue, 226 }  ,draw opacity=1 ][fill={rgb, 255:red, 74; green, 144; blue, 226 }  ,fill opacity=1 ][line width=0.75]      (0, 0) circle [x radius= 3.35, y radius= 3.35]   ;
\draw [color={rgb, 255:red, 74; green, 144; blue, 226 }  ,draw opacity=1 ]   (190,1208.4) -- (217,1218.4) ;
\draw [shift={(217,1218.4)}, rotate = 20.32] [color={rgb, 255:red, 74; green, 144; blue, 226 }  ,draw opacity=1 ][fill={rgb, 255:red, 74; green, 144; blue, 226 }  ,fill opacity=1 ][line width=0.75]      (0, 0) circle [x radius= 3.35, y radius= 3.35]   ;
\draw [shift={(190,1208.4)}, rotate = 20.32] [color={rgb, 255:red, 74; green, 144; blue, 226 }  ,draw opacity=1 ][fill={rgb, 255:red, 74; green, 144; blue, 226 }  ,fill opacity=1 ][line width=0.75]      (0, 0) circle [x radius= 3.35, y radius= 3.35]   ;
\draw [color={rgb, 255:red, 74; green, 144; blue, 226 }  ,draw opacity=1 ][fill={rgb, 255:red, 74; green, 144; blue, 226 }  ,fill opacity=1 ]   (203.75,1156.71) -- (217,1151.73) ;
\draw [shift={(217,1151.73)}, rotate = 339.42] [color={rgb, 255:red, 74; green, 144; blue, 226 }  ,draw opacity=1 ][fill={rgb, 255:red, 74; green, 144; blue, 226 }  ,fill opacity=1 ][line width=0.75]      (0, 0) circle [x radius= 3.35, y radius= 3.35]   ;
\draw [shift={(203.75,1156.71)}, rotate = 339.42] [color={rgb, 255:red, 74; green, 144; blue, 226 }  ,draw opacity=1 ][fill={rgb, 255:red, 74; green, 144; blue, 226 }  ,fill opacity=1 ][line width=0.75]      (0, 0) circle [x radius= 3.35, y radius= 3.35]   ;
\draw [color={rgb, 255:red, 74; green, 144; blue, 226 }  ,draw opacity=1 ]   (217,1218.4) -- (203.5,1213.4) ;
\draw [shift={(203.5,1213.4)}, rotate = 200.32] [color={rgb, 255:red, 74; green, 144; blue, 226 }  ,draw opacity=1 ][fill={rgb, 255:red, 74; green, 144; blue, 226 }  ,fill opacity=1 ][line width=0.75]      (0, 0) circle [x radius= 3.35, y radius= 3.35]   ;
\draw [shift={(217,1218.4)}, rotate = 200.32] [color={rgb, 255:red, 74; green, 144; blue, 226 }  ,draw opacity=1 ][fill={rgb, 255:red, 74; green, 144; blue, 226 }  ,fill opacity=1 ][line width=0.75]      (0, 0) circle [x radius= 3.35, y radius= 3.35]   ;

\end{tikzpicture}
    \caption{In blue, different examples of projection of $Y_\theta$}
    \label{Y_theta}
\end{figure}
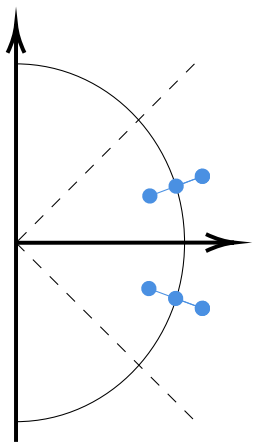

These 2-simplices will be the ones to degenerate for $W$ for some well-chosen values of $\theta$ and $\varepsilon$. Take a regular subdivision of $\left[ -\frac{\pi}{4},\frac{\pi}{4} \right]$ with step $\frac{\pi}{2(n+1)}$. We denote this subdivision $-\frac{\pi}{4} = \alpha_0 < \alpha_1 < ... < \alpha_n < \alpha_{n+1} = \frac{\pi}{4} $. These $\alpha_i$ will be our values of $\theta$. We then translate each $Y_{\alpha_i}$ by some vector so that the image of the 2-simplices are tangent to the unit circle of $\R^2$. And finally, by choosing $\varepsilon$ small enough so that no segment overlap, we obtain some new points $(V_i, V_{i\pm\varepsilon})$. Taking the convex hull of the $3n$ points we have just built makes a 4-polytope $P_n$ with at least $n$ estranged 2-faces that degenerate simultaneously for $W$.

\begin{figure}[ht]
    \centering
    \begin{tikzpicture}[x=0.75pt,y=0.75pt,yscale=-1,xscale=1]

\draw  [fill={rgb, 255:red, 208; green, 2; blue, 27 }  ,fill opacity=0.2 ] (458,1126.4) -- (444,1131.8) -- (420,1146.8) -- (581,1147.8) -- (559,1133.8) -- (544,1127.4) -- (515,1120.4) -- (488,1120.4) -- cycle ;
\draw [line width=1.5]    (359.2,1239.89) -- (651,1239.89) ;
\draw [shift={(654,1239.89)}, rotate = 180] [color={rgb, 255:red, 0; green, 0; blue, 0 }  ][line width=1.5]    (14.21,-4.28) .. controls (9.04,-1.82) and (4.3,-0.39) .. (0,0) .. controls (4.3,0.39) and (9.04,1.82) .. (14.21,4.28)   ;
\draw [line width=1.5]    (499.88,1239.89) -- (500.02,1089) ;
\draw [shift={(500.02,1086)}, rotate = 90.05] [color={rgb, 255:red, 0; green, 0; blue, 0 }  ][line width=1.5]    (14.21,-4.28) .. controls (9.04,-1.82) and (4.3,-0.39) .. (0,0) .. controls (4.3,0.39) and (9.04,1.82) .. (14.21,4.28)   ;
\draw  [dash pattern={on 4.5pt off 4.5pt}] (373.48,1239.89) .. controls (373.48,1239.89) and (373.48,1239.89) .. (373.48,1239.89) .. controls (373.48,1174.21) and (430.07,1120.97) .. (499.88,1120.97) .. controls (569.69,1120.97) and (626.28,1174.21) .. (626.28,1239.89) -- cycle ;
\draw  [dash pattern={on 4.5pt off 4.5pt}]  (626.28,1113.98) -- (499.88,1239.89) ;
\draw  [dash pattern={on 4.5pt off 4.5pt}]  (499.88,1239.89) -- (374.32,1113.98) ;
\draw [color={rgb, 255:red, 208; green, 2; blue, 27 }  ,draw opacity=1 ][line width=1.5]    (420,1146.8) -- (444,1131.8) ;
\draw [color={rgb, 255:red, 208; green, 2; blue, 27 }  ,draw opacity=1 ][line width=1.5]    (559,1133.8) -- (581,1147.8) ;
\draw [color={rgb, 255:red, 208; green, 2; blue, 27 }  ,draw opacity=1 ][line width=1.5]    (458,1126.4) -- (488,1120.4) ;
\draw [color={rgb, 255:red, 208; green, 2; blue, 27 }  ,draw opacity=1 ][line width=1.5]    (515,1120.4) -- (544,1127.4) ;
\draw [line width=1.5]    (488,1120.4) -- (515,1120.4) ;
\draw [line width=1.5]    (420,1146.8) -- (581,1147.8) ;
\draw [line width=1.5]    (444,1131.8) -- (458,1126.4) ;
\draw [line width=1.5]    (544,1127.4) -- (559,1133.8) ;

\end{tikzpicture}
    \caption{Example of the projection of $P_4$ on $W$. In red, the degenerating 2-faces.}
\end{figure}
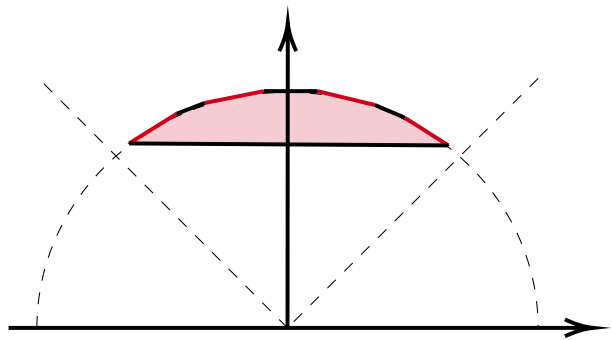

For higher-dimensional examples, we use “hyper-prisms”: let $(e_i)_i$ be the canonical basis of $\R^d$. Let $n$ be fixed and consider the sequence of polytope $(P_{n,d})_{d\geq4}$ defined by \[ \begin{cases}
    P_{n,4} = P_n \\
    P_{n,d+1} = P_{n,d} + \conv(0,e_{d+1} + \rho_{n,d})
\end{cases} \] Where $\rho_{n,d}$ is a small random vector in $\R^{d+1}$ here to help preventing degeneration of faces of dimension greater than 3. For all $n$ and all $d$, $P_{n,d}$ is a $d$-polytope and almost surely it has at least $n$ estranged 2-faces degenerating simultaneously.

\section{Characterisation of equiprojectivity}

First of all, we need a generalisation of the notion of \emph{edge-facet} introduced in \cite{MHAL}. When a 2-face is degenerating and is visible on the boundary of the projection, its disappearing edges can only be compensated by other edges either on the same 2-face, or on another 2-face degenerating on the boundary as well. We say that two parallel 2-faces $F$ and $F'$ are \emph{simultaneously visible} if there exists a projection on a planar space such that only the class of $F$ (and $F'$) is degenerating and $F$ and $F'$ are sent in the boundary of the projection. To take into account the case where $F$ is the only one in its class to degenerate on the boundary, we allow $F'$ to be empty.

Let us note that it is too much to study all possible couples of parallel 2-faces, as we may find some case in dimension $d>3$ where such a couple of 2-faces will never degenerate simultaneously on the boundary. For instance, on the hypercube $\mathopen [0,1 \mathclose]^4$, two parallel 2-faces on the same facet will never be simultaneously visible. If we only represent a degenerated projection of this facet, we can see that we can get the projection of the hypercube by translating the degenerated cube by a vector which is not aligned with any edge of the projected 3-dimensional cube. By studying all the possible cases, we can see that one out of the two 2-faces will be inside the projection (see figure \ref{optimality} below).

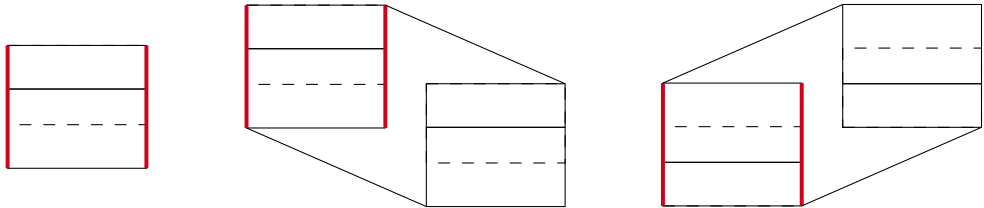
\begin{figure}[ht]
\centering
\begin{tikzpicture}[x=0.75pt,y=0.75pt,yscale=-1,xscale=1]

\draw   (100,2180.67) -- (170,2180.67) -- (170,2202.67) -- (100,2202.67) -- cycle ;
\draw   (100,2202.67) -- (170,2202.67) -- (170,2242.67) -- (100,2242.67) -- cycle ;
\draw  [dash pattern={on 4.5pt off 4.5pt}] (100,2180.67) -- (170,2180.67) -- (170,2220.67) -- (100,2220.67) -- cycle ;
\draw [color={rgb, 255:red, 208; green, 2; blue, 27 }  ,draw opacity=1 ][line width=1.5]    (100,2180.67) -- (100,2242.67) ;
\draw [color={rgb, 255:red, 208; green, 2; blue, 27 }  ,draw opacity=1 ][line width=1.5]    (170,2180.67) -- (170,2242.67) ;
\draw   (220.5,2160.33) -- (290.5,2160.33) -- (290.5,2182.33) -- (220.5,2182.33) -- cycle ;
\draw   (220.5,2182.33) -- (290.5,2182.33) -- (290.5,2222.33) -- (220.5,2222.33) -- cycle ;
\draw  [dash pattern={on 4.5pt off 4.5pt}] (220.5,2160.33) -- (290.5,2160.33) -- (290.5,2200.33) -- (220.5,2200.33) -- cycle ;
\draw [color={rgb, 255:red, 208; green, 2; blue, 27 }  ,draw opacity=1 ][line width=1.5]    (220.5,2160.33) -- (220.5,2222.33) ;
\draw [color={rgb, 255:red, 208; green, 2; blue, 27 }  ,draw opacity=1 ][line width=1.5]    (290.5,2160.33) -- (290.5,2222.33) ;
\draw   (311.17,2200) -- (381.17,2200) -- (381.17,2222) -- (311.17,2222) -- cycle ;
\draw   (311.17,2222) -- (381.17,2222) -- (381.17,2262) -- (311.17,2262) -- cycle ;
\draw  [dash pattern={on 4.5pt off 4.5pt}] (311.17,2200) -- (381.17,2200) -- (381.17,2240) -- (311.17,2240) -- cycle ;
\draw   (430.5,2261.67) -- (500.5,2261.67) -- (500.5,2239.67) -- (430.5,2239.67) -- cycle ;
\draw   (430.5,2239.67) -- (500.5,2239.67) -- (500.5,2199.67) -- (430.5,2199.67) -- cycle ;
\draw  [dash pattern={on 4.5pt off 4.5pt}] (430.5,2261.67) -- (500.5,2261.67) -- (500.5,2221.67) -- (430.5,2221.67) -- cycle ;
\draw [color={rgb, 255:red, 208; green, 2; blue, 27 }  ,draw opacity=1 ][line width=1.5]    (430.5,2261.67) -- (430.5,2199.67) ;
\draw [color={rgb, 255:red, 208; green, 2; blue, 27 }  ,draw opacity=1 ][line width=1.5]    (500.5,2261.67) -- (500.5,2199.67) ;
\draw   (521.17,2222) -- (591.17,2222) -- (591.17,2200) -- (521.17,2200) -- cycle ;
\draw   (521.17,2200) -- (591.17,2200) -- (591.17,2160) -- (521.17,2160) -- cycle ;
\draw  [dash pattern={on 4.5pt off 4.5pt}] (521.17,2222) -- (591.17,2222) -- (591.17,2182) -- (521.17,2182) -- cycle ;
\draw    (220.5,2222.33) -- (311.17,2262) ;
\draw    (290.5,2160.33) -- (381.17,2200) ;
\draw    (500.5,2261.67) -- (591.17,2222) ;
\draw    (430.5,2199.67) -- (521.17,2160) ;

\end{tikzpicture}
\caption{On the left: a degenerated projection of the cube ; On the right and center: two examples of degenerated projection of the hypercube, showing that one or the other red 2-face is sent inside the projection.}
\label{optimality}
\end{figure}

Let us assume that the two facets depicted on the figure \ref{optimality} are the ones parallel to $e_4^\perp$ and the 2-faces in red are parallel to $\vect(e_1,e_2)$. If we have a projection sending both red 2-faces on the boundary, either they are sent in the same edge of the projection, or they are sent in two different parallel edges of the projection. In each case, let us say that their image is parallel to some vector $u$. In the first case (resp. the second case), the image of $e_3$ (resp. $e_4$) is also parallel to $u$, and so whole 3-faces are degenerating, including other 2-faces in different classes.

\begin{defin}[edge-2-faces and their properties]
    Let $P$ be a polytope, let $F$ and $F'$ be two parallel 2-faces that are simultaneously visible (with possibly $F' = \varnothing$) and let $e$ be an edge of $F$. We say that $(e,F,F')$ is an \emph{edge-2-faces}. Furthermore: \begin{itemize}
        \item if $F'\neq \varnothing$, then $P$ intersected with the smallest affine sub-space containing both $F$ and $F'$ is a 3-polytope and $F$ and $F'$ are two of its facets. We define the orientation of every edge-2-faces of the form $(e,F,F')$ or $(e,F',F)$ as in \cite{MHAL}.
        \item if $F' = \varnothing$ we arbitrarily choose the orientation of one edge of $F$, as its edges describe a cyclic path, this allows us to define the orientation of every other edges.
        \item We say that two edge-2-faces $(e,F,F')$ and $(\tilde e,\tilde F,\tilde F')$ \emph{compensate each other} if
        \begin{enumerate}
            \item either $F = \tilde F'$ and $F' = \tilde F$ and $e$ and $\tilde e$ are parallel and have different orientations,
            \item or $F=\tilde F$ and $F' = \tilde F'$ (possibly empty) and $e$ and $\tilde e$ are parallel and have different orientations.
        \end{enumerate}
    \end{itemize}
\end{defin}

\begin{rmk}
    The choice we make in the case where $F'=\varnothing$ does not change the compensation.
\end{rmk}

We divide the proof of Theorem \ref{charact} in two different theorem, both stating one implication of the wanted characterisation rather than only one as in \cite{MHAL}. In our case, we have to check if the geometric intuition used by Masud Hasan and Anna Lubiw carries over to higher dimension. Let us tackle the first implication:

\begin{thm}
\label{frt_implication}
    Let $P$ be a polytope. If its edge-2-faces can be partitioned into compensating pairs, then $P$ is equiprojective.
\end{thm}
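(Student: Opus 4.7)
The plan is to show that the number of edges of $\pi_W(P)$ is constant as $W$ varies over all admissible planes. By Theorem \ref{balade}, any two admissible orthogonals can be joined by a path along which only finitely many degenerations occur, each involving a single class of 2-faces. So it suffices to show that the edge count of $\pi_W(P)$ does not change when we traverse one such single-class degeneration.

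First I would set up an edge dictionary. When $W$ is admissible, no 2-face projects to a segment, so the boundary of $\pi_W(P)$ decomposes into finitely many edges, each of which is the image of a unique edge of $P$; call these the \emph{contributing edges}. By an argument using strict sign conditions on finitely many determinants, the set of contributing edges is locally constant on the open set of admissible planes, and the number of edges of $\pi_W(P)$ equals the number of contributing edges of $P$ for $W$. In particular, I only need to compare the sets of contributing edges on the two sides of each single-class degeneration along the path.

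Next, at a transition time $t_0$ where exactly the class $[F]$ degenerates, I would isolate the 2-faces in this class that remain visible on the boundary as $W_t$ crosses $W_{t_0}$; by the discussion before the definition of edge-2-faces these come in simultaneously visible pairs $(F,F')$ (with possibly $F'=\varnothing$). Just before $t_0$, a certain subset of edges of $F$ contributes to the boundary; at $t_0$ every edge of $F\cup F'$ projects onto a common line; just after $t_0$ the complementary subset of edges of $F$ contributes, and similarly for $F'$. The orientation defined on edge-2-faces $(e,F,F')$ is built precisely so that an edge contributes on one side of $t_0$ exactly when its orientation points in a fixed way relative to the sweep direction of $W_t$. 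The compensating partition then closes the argument: each disappearing edge-2-face $(e,F,F')$ has a partner $(\tilde e,\tilde F,\tilde F')$ with $\{\tilde F,\tilde F'\}=\{F,F'\}$, $\tilde e$ parallel to $e$, and opposite orientation, and this partner is exactly an edge that appears on the boundary just after $t_0$. The compensation therefore provides a bijection between disappearing and appearing contributing edges, so the edge count is preserved.

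The main obstacle I anticipate is rigorously proving the orientation/contribution dictionary in dimension $d>3$: one has to verify that the orientation on edge-2-faces really does record which side of the transition at $t_0$ an edge sits on. I would establish this by working locally around $t_0$ with the explicit path supplied by Theorem \ref{balade}, reading off the local change of $\pi_{W_t}(F)$ and $\pi_{W_t}(F')$ from the motion of $W_t^\perp$, and invoking the normal cone description of the boundary at the 2-face $F$. The subcase $F'=\varnothing$ requires separate care: the orientation on $F$ was chosen arbitrarily, so one has to observe that compensating pairs then live entirely inside $F$ and that opposite orientations of parallel edges of $F$ correctly encode the two sides of the sweep regardless of the initial arbitrary choice. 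Once the dictionary is in place, the matching by the compensating partition is immediate and the equiprojectivity of $P$ follows.
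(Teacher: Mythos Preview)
Your proposal is correct and follows essentially the same route as the paper: reduce via Theorem~\ref{balade} to a single-class degeneration, identify the at most two boundary-visible representatives $F,F'$ of that class, and use the compensating partition to match disappearing against appearing boundary edges. The paper's only organisational addition is to name the two \emph{fixed points} of $F$ and the resulting \emph{visible} and \emph{invisible chains} $V(F),I(F)$, then observe that all edges of $V(F)\cup V(F')$ carry the same orientation so their compensating partners necessarily lie in $I(F)\cup I(F')$---this is precisely your bijection in different clothing.
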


\begin{proof}
    Let $W_0$ and $W_1$ be two admissible planes. Our goal is to show that $\pi_{W_0}(P)$ and $\pi_{W_1}(P)$ have the same number of edges. According to Theorem \ref{balade}, we can go continuously from $W_0^\perp$ to $W_1^\perp$ with a finite number of degenerations, and only one class of 2-faces is degenerating each time. It is enough to show that the amount of edges is kept during a single degeneration: rather than working on the whole path, we focus on the local path around a single degeneration. We call these local paths \emph{elementary transformations}.
    
    During such elementary transformation, there are only two different possibilities. Among all the representative of the only class of 2-faces that is degenerating: \begin{itemize}
        \item either none of them are sent in the boundary. Here, it is clear that the size of the projection does not change, and so we can ignore this case,
        \item or one or two of them are sent on the boundary.
    \end{itemize}
    If three or more of them were to be sent on the boundary, then one edge of the projection would contain at least two of the degenerating 2-faces. In that case, the preimage of this edge is a $k$-face of the polytope containing at least two distinct parallel 2-faces, and so $k$ would be greater than 3. Then every other of its 2-faces are degenerating as well, which is excluded. Let's denote $F$ and $F'$ (with possibly $F'=\varnothing$) the two 2-faces of the second case. It means that they are simultaneously visible.
    
    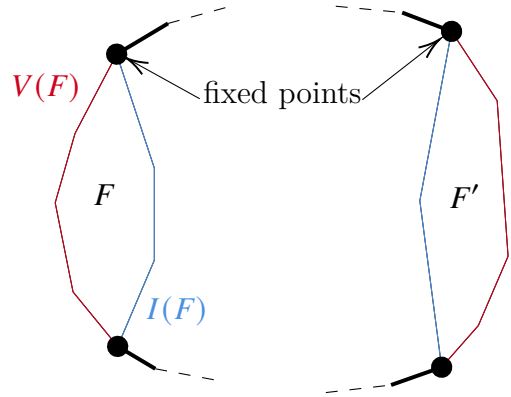
\begin{figure}[ht]
    \centering
    \begin{tikzpicture}[x=0.75pt,y=0.75pt,yscale=-1,xscale=1]

\draw  [color={rgb, 255:red, 0; green, 0; blue, 0 }  ,draw opacity=1 ] (111.5,2486.8) -- (130.5,2544.3) -- (130.5,2590.8) -- (112,2634.3) -- (89.5,2606.8) -- (80.5,2561.8) -- (90.5,2526.8) -- cycle ;
\draw   (280.4,2475.3) -- (303.4,2510.3) -- (308.9,2588.8) -- (293.9,2623.8) -- (275.4,2644.8) -- (264.4,2560.8) -- cycle ;
\draw [color={rgb, 255:red, 208; green, 2; blue, 27 }  ,draw opacity=1 ]   (111.5,2486.8) -- (90.5,2526.8) ;
\draw [color={rgb, 255:red, 208; green, 2; blue, 27 }  ,draw opacity=1 ]   (90.5,2526.8) -- (80.5,2561.8) ;
\draw [color={rgb, 255:red, 208; green, 2; blue, 27 }  ,draw opacity=1 ]   (80.5,2561.8) -- (89.5,2606.8) ;
\draw [color={rgb, 255:red, 208; green, 2; blue, 27 }  ,draw opacity=1 ]   (89.5,2606.8) -- (112,2634.3) ;
\draw [color={rgb, 255:red, 208; green, 2; blue, 27 }  ,draw opacity=1 ]   (280.4,2475.3) -- (303.4,2510.3) ;
\draw [color={rgb, 255:red, 208; green, 2; blue, 27 }  ,draw opacity=1 ]   (303.4,2510.3) -- (308.9,2588.8) ;
\draw [color={rgb, 255:red, 208; green, 2; blue, 27 }  ,draw opacity=1 ]   (308.9,2588.8) -- (293.9,2623.8) ;
\draw [color={rgb, 255:red, 208; green, 2; blue, 27 }  ,draw opacity=1 ]   (293.9,2623.8) -- (275.4,2644.8) ;
\draw [color={rgb, 255:red, 74; green, 144; blue, 226 }  ,draw opacity=1 ]   (111.5,2486.8) -- (130.5,2544.3) ;
\draw [color={rgb, 255:red, 74; green, 144; blue, 226 }  ,draw opacity=1 ]   (130.5,2544.3) -- (130.5,2590.8) ;
\draw [color={rgb, 255:red, 74; green, 144; blue, 226 }  ,draw opacity=1 ]   (130.5,2590.8) -- (112,2634.3) ;
\draw [color={rgb, 255:red, 74; green, 144; blue, 226 }  ,draw opacity=1 ]   (280.4,2475.3) -- (264.4,2560.8) ;
\draw [color={rgb, 255:red, 74; green, 144; blue, 226 }  ,draw opacity=1 ]   (264.4,2560.8) -- (275.4,2644.8) ;
\draw [line width=1.5]    (111.5,2486.8) -- (137.4,2471.3) ;
\draw [shift={(111.5,2486.8)}, rotate = 329.1] [color={rgb, 255:red, 0; green, 0; blue, 0 }  ][fill={rgb, 255:red, 0; green, 0; blue, 0 }  ][line width=1.5]      (0, 0) circle [x radius= 4.36, y radius= 4.36]   ;
\draw [line width=1.5]    (255.5,2465.8) -- (280.4,2475.3) ;
\draw [shift={(280.4,2475.3)}, rotate = 20.88] [color={rgb, 255:red, 0; green, 0; blue, 0 }  ][fill={rgb, 255:red, 0; green, 0; blue, 0 }  ][line width=1.5]      (0, 0) circle [x radius= 4.36, y radius= 4.36]   ;
\draw [line width=1.5]    (249.9,2653.8) -- (275.4,2644.8) ;
\draw [shift={(275.4,2644.8)}, rotate = 340.56] [color={rgb, 255:red, 0; green, 0; blue, 0 }  ][fill={rgb, 255:red, 0; green, 0; blue, 0 }  ][line width=1.5]      (0, 0) circle [x radius= 4.36, y radius= 4.36]   ;
\draw [line width=1.5]    (112,2634.3) -- (131,2645.6) ;
\draw [shift={(112,2634.3)}, rotate = 30.74] [color={rgb, 255:red, 0; green, 0; blue, 0 }  ][fill={rgb, 255:red, 0; green, 0; blue, 0 }  ][line width=1.5]      (0, 0) circle [x radius= 4.36, y radius= 4.36]   ;
\draw    (153.4,2509.3) -- (118.15,2489.77) ;
\draw [shift={(116.4,2488.8)}, rotate = 28.99] [color={rgb, 255:red, 0; green, 0; blue, 0 }  ][line width=0.75]    (10.93,-3.29) .. controls (6.95,-1.4) and (3.31,-0.3) .. (0,0) .. controls (3.31,0.3) and (6.95,1.4) .. (10.93,3.29)   ;
\draw    (235.4,2509.8) -- (272.82,2480.53) ;
\draw [shift={(274.4,2479.3)}, rotate = 141.97] [color={rgb, 255:red, 0; green, 0; blue, 0 }  ][line width=0.75]    (10.93,-3.29) .. controls (6.95,-1.4) and (3.31,-0.3) .. (0,0) .. controls (3.31,0.3) and (6.95,1.4) .. (10.93,3.29)   ;
\draw  [dash pattern={on 4.5pt off 4.5pt}]  (137.4,2471.3) -- (169,2462.6) ;
\draw  [dash pattern={on 4.5pt off 4.5pt}]  (222,2462.6) -- (255.5,2465.8) ;
\draw  [dash pattern={on 4.5pt off 4.5pt}]  (217,2657.6) -- (249.9,2653.8) ;
\draw  [dash pattern={on 4.5pt off 4.5pt}]  (131,2645.6) -- (163,2651.6) ;

\draw (98.5,2551.4) node [anchor=north west][inner sep=0.75pt]   [align=left] {$\displaystyle F$};
\draw (278,2551.4) node [anchor=north west][inner sep=0.75pt]   [align=left] {$\displaystyle F'$};
\draw (154,2498.9) node [anchor=north west][inner sep=0.75pt]   [align=left] {fixed points};
\draw (125,2607.9) node [anchor=north west][inner sep=0.75pt]  [color={rgb, 255:red, 74; green, 144; blue, 226 }  ,opacity=1 ] [align=left] {$\displaystyle I( F)$};
\draw (57.5,2495.4) node [anchor=north west][inner sep=0.75pt]  [color={rgb, 255:red, 208; green, 2; blue, 27 }  ,opacity=1 ] [align=left] {$\displaystyle V( F)$};

\end{tikzpicture}
    \caption{Illustration of the fixed points, visible chains and invisible chains.}
    \label{visibility}
    \end{figure}
    
    On $F$ for instance, there are two special points. As $F$ is degenerating, it is sent exactly on an edge of the projected polygon, and the two extreme points of that edge both correspond to a vertex of $F$. These vertices both belong to a visible edge of $P$ outside of $F$ (in bold lines on figure \ref{visibility}). Since these edges can disappear if and only if there is a degeneration other than that of $F$, they must stay on the boundary through the elementary transformation. In particular, it is also the case for these special vertices of $F$, that we call \emph{fixed points}. The fixed points of $F$ define two chains of edges of $F$. One which was sent on the boundary of the projection and one inside before degeneracy. We call them the \emph{visible chain} $V(F)$ and the \emph{invisible chain} $I(F)$ respectively (see figure \ref{visibility}). We do the same thing with $F'$, with the convention that $V(F')$ and $I(F')$ are both empty if $F'=\varnothing$.
    
    When the degeneracy occurs, there are two different possible scenarios: either the chains $V(F)$ and $V(F')$ stay on the boundary, or they exchange their role with $I(F)$ and $I(F')$ respectively. The first case is not interesting, as nothing changes. We suppose that only the second case happens. In this case, it is important to note that two parallel edges of $F$ can not be in the same set of visibility, as it would imply simultaneous degenerations. In order to conclude, we have to show that \begin{equation}
        \label{eq_1} |V(F)| + |V(F')| = |I(F)| + |I(F')| 
    \end{equation}
    
    Let us study the orientation of the elements of $V(F)$ and $V(F')$. If we look at the 3-polytope which helped define the orientation of the edges of $F$ and $F'$, we notice that the chains $V(F)$ and $V(F')$ have the same orientation (for instance “upward” on figure \ref{visibility}). Hence, every edge in $V(F)$ is compensated either by an edge of $I(F)$ or by an edge of $I(F')$ (and similarly with $F'$). As the edge-2-faces can be partitioned into compensating pairs, equation \eqref{eq_1} holds for each elementary transformation, and so $P$ is equiprojective.
\end{proof}

\begin{rmk}
    The visible and invisible chains depend on the elementary transformation we consider. We will not indicate this dependence unless necessary.
\end{rmk}

To study the reciprocal, we first need a few lemmas that are immediate in dimension 3 in order to handle a polytope. First of all, we need to guarantee that we can build an elementary transformation which swaps its visible and invisible chains.

\begin{lemma}  
    Let $F$ and $F'$ be simultaneously visible 2-faces of $P$. Then there is an elementary transformation which swaps $V(F)$ (resp. $V(F')$) with $I(F)$ (resp. $I(F')$).
\end{lemma}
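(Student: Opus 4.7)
The plan is to construct, from the plane $W$ witnessing the simultaneous visibility of $F$ and $F'$, a short path in the Grassmannian passing through $W$ with exactly one degeneration --- that of the class of $F$ --- and then to verify that this path swaps $V(F),V(F')$ with $I(F),I(F')$.

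By hypothesis, there is a plane $W$ such that only the class of $F$ degenerates for $W$ and both $F$ and $F'$ lie in $\partial \pi_W(P)$. I would fix a basis $(u_1,\ldots,u_{d-2})$ of $W^\perp$ and a basis $(f_1,f_2)$ of $\vect[F]$, and reuse the perturbation strategy of Lemma \ref{step_2}: consider paths of the form $t\mapsto (u_1+tv,u_2,\ldots,u_{d-2})$ with $v$ chosen outside a finite union of hyperplanes in $\R^d$ so that no other class of 2-faces degenerates in a neighborhood of $t=0$. The degeneracy of $F$ along such a path is detected by the affine function
\[
D_F(t) \;=\; \det(u_1+tv,u_2,\ldots,u_{d-2},f_1,f_2),
\]
and $D_F(0)=0$ by hypothesis on $W$. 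I would further impose the open condition that the leading coefficient $\det(v,u_2,\ldots,u_{d-2},f_1,f_2)$ be non-zero, so that $D_F$ changes sign at $t=0$. The resulting path crosses the degeneration locus of $F$ transversally at $t=0$ and is admissible for all sufficiently small $t\neq 0$; this local piece is our candidate elementary transformation.

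I would next check that the fixed points of $F$ (and of $F'$ when $F'\neq\varnothing$) persist on $\partial \pi_{W_t}(P)$ for $t$ small. The images of these vertices belong to edges of $P$ outside $F\cup F'$ which are visible at $t=0$, and since visibility is an open condition away from the degeneration of $F$, these edges remain visible for small $t$. Inside the affine plane containing $F$, the segment through the two fixed points cuts $F$ into the two chains $V(F)$ and $I(F)$, one on each side. For $t\neq 0$ small, $\pi_{W_t}(F)$ is a thin polygon lying on one side of this segment, and the side is governed by the sign of $D_F(t)$ in an appropriate local chart; hence it switches as $t$ crosses $0$.

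The main obstacle is to make precise the local claim that, for $t$ small, the portion of $\partial \pi_{W_t}(P)$ near the limit edge $\pi_W(F)$ is supplied exactly by the outward chain of $\pi_{W_t}(F)$ (and similarly for $F'$). I would argue that only $F$'s class degenerates at $t=0$, so the contributions of all other 2-faces to that boundary region vary continuously in $t$ and stay at positive distance from the relative interior of $\pi_W(F)$ by the admissibility of nearby planes; the boundary of $\pi_{W_t}(P)$ near $\pi_W(F)$ is therefore forced to follow the only candidate chains, namely those of $F$ and $F'$ projecting to the outer side. Combined with the sign analysis of $D_F$ (and of the analogous $D_{F'}$, which has the same sign change by parallelism of $F$ and $F'$), this yields the announced swap; the case $F'=\varnothing$ works identically since $V(F')$ and $I(F')$ are then vacuously empty.
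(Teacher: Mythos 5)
Your construction is correct and follows essentially the same route as the paper: a linear perturbation $t\mapsto(u_1+tv,u_2,\dots,u_{d-2})$ of the orthogonal witnessing simultaneous visibility, crossing the degeneration locus of $F$ transversally at $t=0$, with the swap of chains read off from a sign change at $t=0$. The only difference is cosmetic: the paper picks $u_1\in\vect[F]\cap W^\perp$ and $v\in(W^\perp+\vect[F])^\perp$ and explicitly computes $\pi_{W_t}(u_1)$ via Gram--Schmidt to exhibit the sign flip, whereas you read the same flip off the sign of $D_F(t)$; both detect the same orientation reversal of $\pi_{W_t}$ restricted to $\vect[F]$ relative to the persisting direction through the fixed points.
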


\begin{proof}
    Let $W^\perp$ be the orthogonal given by the definition of simultaneous visibility of $F$ and $F'$. Let $(u_1,...,u_{d-2})$ be an orthonormal basis of $W^\perp$ such that $u_1\in \vect[F]$. Let $v$ in $(W^\perp + \vect[F])^\perp$, we consider the path: \[ \varphi : t\in \mathopen] -\varepsilon,\varepsilon\mathclose[ \mapsto (u_1+tv,u_2,...,u_{d-2}) \] where $\varepsilon>0$ is small enough such that no other 2-face is degenerating. As \[\det(u_1+tv,u_2,...,u_{d-2},f_1,f_2) = t\det(v,u_2,...,u_{d-2},f_1,f_2)\] where $(f_1,f_2)$ is a basis of $\vect[F]$, we get that all $W_t^\perp = \vect(\varphi(t))$ are admissible for $t\neq 0$, so we have an elementary transformation. When it comes to the swapping, we aim at studying $\pi_{W_t}(u_1)$, as $u_1$ is the cause of the degeneration.
    
    Let $w_1 \in (\vect(u_1,... ,u_{d-2},v))^\perp$, we then have that $w_1\in W_t$ for all $t\in\mathopen] -\varepsilon,\varepsilon\mathclose[$. Let $w_2$ be a vector completing $(w_1)$ into an orthonormal basis of $W_0$. Up to reducing $\varepsilon$ even more, we can assume that $(u_1+tv,u_2,...,u_{d-2},w_1,w_2)$ are all free families. The only thing preventing it to be an orthogonal family is that $w_2$ may not be orthogonal to every previous vectors, more specifically $u_1+tv$. Then we write: \[ \tilde w_2(t) = w_2- t\frac{\langle v,w_2 \rangle}{||u_1+tv||^2}(u_1+tv) \] so $(w_1,\tilde w_2(t))$ is an orthogonal basis of $W_t$. Then we can conclude that: \[ \pi_{W_t}(u_1) = -t\frac{\langle v,w_2 \rangle}{||u_1+tv||^2.||\tilde w_2(t)||^2}\tilde w_2(t) \]
    
    Let $V_1,V_2$ be the two fixed points of this transformation, $(V_1,\overrightarrow{V_1V_2},u_1)$ is a basis of the affine hull of $F$. In it, every vertices in the visible chain have a same-sign coordinate along $u_1$, let us say they are positive, and every vertices in the invisible chain have a negative coordinate along $u_1$. While $\pi_{W_t}(\overrightarrow{V_1V_2})$ does not change “a lot” through the transformation, the evolution of $\pi_{W_t}(u_1)$ is given by the function \[ t\mapsto \left(\langle\pi_{W_t}(u_1),w_1\rangle,\left\langle\pi_{W_t}(u_1),\frac{\tilde w_2(t)}{||\tilde w_2(t)||}\right\rangle\right) = \left(0, -t\frac{\langle v,w_2 \rangle}{||u_1+tv||^2||\tilde w_2(t)||} \right) \] Where the 0 comes from the fact that $\pi_{W^\perp_t}(u_1)$ and thus $\pi_{W_t}(u_1)$ both lie in $\vect(u_1,v)$. Notice the change of sign of the second coordinate at $t=0$. It is this change of sign which guarantees the swapping of $V(F)$ and $I(F)$, and the same study in the affine hull of $F'$ yields the same result.
\end{proof}

\begin{rmk}
    Changing $v$ into $-v$ makes the transformation go backward. As such, we can choose which one of the two chains is the visible one.
\end{rmk}

\begin{corollary}
    The two definitions of degeneracy presented at the end of section 2, that are :
    \begin{enumerate}
        \item “no 2-face is sent on the boundary of the projection”
        \item “no 2-face is sent on a segment anywhere in the projection”
    \end{enumerate}
    yield the same definition of equiprojectivity.
\end{corollary}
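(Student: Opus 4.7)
The direction from definition~1 to definition~2 is trivial, since admissibility under definition~2 is strictly stronger than under definition~1. For the converse, I would assume $P$ is $k$-equiprojective under definition~2 and take $W$ admissible under definition~1. If $W$ is also admissible under definition~2, the hypothesis applies directly; otherwise, some classes of 2-faces of $P$ degenerate for $W$, but by definition-1 admissibility none of their representatives has its image contained in $\partial\pi_W(P)$.

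I would then construct an elementary transformation through $W$ following the recipe of the preceding lemma. For a representative $F$ of a degenerating class, pick a basis $(u_1,\ldots,u_{d-2})$ of $W^\perp$ with $u_1\in\vect[F]$ and a generic $v\in(W^\perp+\vect[F])^\perp$, and form the path $W_t^\perp=\vect(u_1+tv,u_2,\ldots,u_{d-2})$. Iterating this construction once per degenerate class yields $\varepsilon>0$ such that $W_t$ is definition-2-admissible for every $t\in(0,\varepsilon)$; by hypothesis each $\pi_{W_t}(P)$ is then a $k$-gon. Since $\pi_{W_t}(P)\to\pi_W(P)$ in Hausdorff distance as $t\to 0^+$, and the limit of convex polygons with $k$ vertices has at most $k$ vertices (by the convergence of their piecewise-linear support functions), $\pi_W(P)$ is a convex polygon with at most $k$ vertices.

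The remaining task is to show that $\pi_W(P)$ has at least $k$ boundary edges, i.e., that no two of the $k$ silhouette vertices of $\pi_{W_t}(P)$ collide in the limit and no three consecutive silhouette vertices become collinear. The second phenomenon would, via Lemma~7.10 of \cite{ZIEG} applied to the polygon edge they line up on, yield a face of $P$ of dimension at least $2$ projecting onto that edge, hence containing a 2-face sent onto $\partial\pi_W(P)$---contradicting definition-1 admissibility.

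The main obstacle is the collision analysis. If two distinct vertices $v_i,v_j$ of $P$ satisfied $\pi_W(v_i)=\pi_W(v_j)$ at a polygon vertex $q$ of $\pi_W(P)$, I would take the polygon edge adjacent to $q$ and consider its preimage intersected with $P$, which is a face of $P$ by Lemma~7.10 of \cite{ZIEG}. This face contains $v_i,v_j$ (both projecting to the endpoint $q$) and at least one additional vertex projecting to the other endpoint of the polygon edge; it therefore has dimension at least $2$ and must contain a 2-face whose image lies on the polygon edge, again contradicting definition-1 admissibility of $W$. This is the only place where the weaker admissibility condition is used essentially; once handled, $\pi_W(P)$ has exactly $k$ boundary edges, completing the proof.
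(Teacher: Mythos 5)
Your easy direction and your overall strategy for the converse are sound, but you take a genuinely different route from the paper. The paper also perturbs $W$ with the elementary-transformation machinery, but it removes the interior degeneracies one class at a time, arguing at each step that un-degenerating a class all of whose representatives project to the interior leaves the boundary polygon combinatorially unchanged (this is the ``none of them are sent in the boundary'' case from the proof of Theorem \ref{frt_implication}), and concludes by induction on the number of degenerating classes. You instead perturb all the degeneracies away at once, use the hypothesis to see that the nearby shadows are $k$-gons, and recover $\pi_W(P)$ as a Hausdorff limit: the upper bound $\le k$ comes from the limit of convex hulls of $k$ points, and the lower bound comes from the ``three vertices projecting into one boundary edge force a degenerate $2$-face on the boundary'' reduction via Lemma~7.10 of \cite{ZIEG}, which is exactly the Section~2 argument and invokes definition-1 admissibility in the right place. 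Both proofs rest on the same perturbation lemma; yours trades the paper's step-by-step bookkeeping for a compactness argument, at the cost of having to rule out vertex collisions and collinearities in the limit, which you do essentially correctly (you should also treat the case where the collision point $q$ lands in the relative interior of an edge of $\pi_W(P)$ rather than at a vertex, but the same three-vertices argument applies there).

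One point needs repair. A degenerating class may be sent to a single point, i.e.\ $\vect[F]\subseteq W^\perp$, and in that case your path $t\mapsto(u_1+tv,u_2,\dots,u_{d-2})$ with $u_1\in\vect[F]$ does not un-degenerate $F$: since $\vect[F]$ is $2$-dimensional and contained in $\vect(u_1,\dots,u_{d-2})$, the intersection $\vect[F]\cap\vect(u_2,\dots,u_{d-2})$ has dimension at least $1$ and sits inside $W_t^\perp$ for every $t$, so $F$ stays degenerate (now sent to a segment) along the whole path. Hence ``once per degenerate class'' is not enough; such a class requires two successive perturbations. The paper sidesteps this by first observing that not every degenerating class can be sent to a point (otherwise the whole polytope would project to a point) and then always un-degenerating a class that is sent to a segment. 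Either fix works, but as written your claim that one iteration per class reaches a definition-2-admissible $W_t$ fails for point-degenerate classes when $d\ge 4$.
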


\begin{proof}
    It is clear that the condition $(2)$ is stronger than condition $(1)$, so equiprojectivity for condition $(1)$ implies equiprojectivity for condition $(2)$. Given a polytope, we want to study the projections which make 2-faces degenerate only inside the projection.
    
    Let us say that we have $n$ different such degenerating classes. We claim that at least one of them is sent on a segment. Indeed, when a class of 2-faces is sent on a dot, then all of the edges of the representatives of the class are also sent on a dot. This guarantees that every 2-face containing one of these edges are also degenerating. If every degenerating 2-faces are sent on dots, then by proximity, the whole polytope would be sent on a dot, which is absurd.
    
    So let $F$ be a representative of one of the degenerating class sent on a segment. Using the elementary transform developed in the previous proof, we can “un-degenerate” the class of $F$ while avoiding any new degeneracies. In consequence, the combinatorial complexity of the projection does not change and there will be at most $n-1$ classes that are degenerating inside the projection, one of which is sent on a segment. Iterating this process by recursion shows that the combinatorial complexity of a projection satisfying $(2)$ is the same as the one of a projection satisfying $(1)$.
\end{proof}

Now that we can guarantee elementary transformations, the only task left is to choose wisely our chains of visibility.

\begin{lemma}
    \label{chains}
    Let $P$ be a polytope, $F$ and $F'$ (possibly empty) be two of its 2-faces that are simultaneously visible and $e$ be an edge of $F$ such that $F$ has no other edge parallel to $e$. Then there exists two different elementary transformations such that their respective visible chains $V_1(F)$ and $V_2(F)$ satisfy: \[ V_{1}(F) = V_{2}(F) \sqcup \{e\} \text{ or } V_{2}(F) = V_{1}(F) \sqcup \{e\} \]
\end{lemma}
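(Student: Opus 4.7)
My plan is to construct two elementary transformations whose degenerate orthogonals differ only by a small rotation of the collapsing direction $u_1$ within $\vect[F]$, where the rotation crosses the line spanned by $e$. This rotation causes the extreme vertices of $F$ in the direction perpendicular to $u_1$ within $\vect[F]$ to switch between the two endpoints of $e$, which in turn changes the visible chain of $F$ by exactly the edge $e$.

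First, using the simultaneous visibility of $F$ and $F'$, I fix an admissible orthogonal $W^\perp$ of dimension $d-2$ at which only the class of $F$ (and $F'$) is degenerating and at which both $F$ and $F'$ lie on the boundary of the projection. Let $u_1$ span $\vect[F] \cap W^\perp$ and extend it to a basis $(u_1, u_2, \ldots, u_{d-2})$ of $W^\perp$. Within the $2$-plane $\vect[F]$, I pick two vectors $u_1^{(1)}$ and $u_1^{(2)}$, both close to the direction of $e$, lying on opposite sides of the line spanned by $e$ in $\vect[F]$, and such that neither is parallel to any edge of $F$. Setting $W_{(i)}^\perp = \vect(u_1^{(i)}, u_2, \ldots, u_{d-2})$, a determinantal argument analogous to those used in Section~4 shows that, away from a finite set of bad rotation angles in $\vect[F]$, each $W_{(i)}^\perp$ still degenerates only the class of $F$: for every $2$-face $G$ outside that class, the function $\det(u_1(\theta), u_2, \ldots, u_{d-2}, g_1, g_2)$ seen as a function of the rotation angle $\theta$ is a non-trivial combination of $\cos \theta$ and $\sin \theta$ that is non-zero at the starting angle. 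I then apply the previous lemma to each $W_{(i)}^\perp$ to obtain two elementary transformations $\mathcal T_1$ and $\mathcal T_2$, using the remark that the sign of the perturbation vector can be chosen so as to designate which of the two chains of $F$ becomes the visible one.

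The key geometric step is to identify the fixed points of $F$ for each transformation. These are the vertices of $F$ extreme in the direction perpendicular to $u_1^{(i)}$ within $\vect[F]$, i.e. the vertices achieving the maximum and the minimum of the corresponding linear form on the $2$-dimensional polygon $F$. Since $F$ has no other edge parallel to $e$, the vertex of $F$ achieving the extreme coordinate on the side opposite to $e$ is a unique vertex $V_3$ that stays the same throughout the rotation, while the vertex on the side of $e$ jumps from one endpoint $V_1$ of $e$ to the other endpoint $V_2$ as the perpendicular direction crosses the outward normal of $e$. Thus the fixed points are $\{V_1, V_3\}$ for $\mathcal T_1$ and $\{V_2, V_3\}$ for $\mathcal T_2$. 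Writing $P_1$ for the path in $\partial F$ from $V_3$ to $V_1$ avoiding $e$ and $P_2$ for the path from $V_2$ to $V_3$ avoiding $e$, the two candidate chains of $\mathcal T_1$ are $P_1$ and $\{e\} \cup P_2$, while those of $\mathcal T_2$ are $P_1 \cup \{e\}$ and $P_2$. Using the freedom granted by the previous lemma, I set $V_1(F) = P_1$ and $V_2(F) = P_1 \cup \{e\}$, which gives $V_2(F) = V_1(F) \sqcup \{e\}$ as required.

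The main difficulty I expect is rigorously handling the transition of the extreme vertex at the critical direction where $u_1$ is parallel to $e$: one must verify, using the $2$-dimensional normal-cone structure of $F$, that the hypothesis that no other edge of $F$ is parallel to $e$ ensures both that $V_3$ is the same on both sides of the transition and that exactly one endpoint of $e$ becomes the sole extreme on each side. A secondary subtlety is that $F'$ shares the plane $\vect[F]$, so its own fixed vertices may transition under the same rotation; however, the statement only constrains $V_i(F)$, so the orientation of each elementary transformation can still be chosen freely to set up the desired visible chain of $F$.
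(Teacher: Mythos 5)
Your proof is correct and follows essentially the same route as the paper's: both rotate the collapsing direction $u_1$ inside $\vect[F]$ across the direction of $e$, use the hypothesis that $F$ has no other edge parallel to $e$ to guarantee that only $e$ changes chains, and then invoke the preceding lemma together with its remark to realise the two elementary transformations with the desired visible chains. The only difference is presentational: the paper tracks the sign changes of the inner products $\langle \vec e_i,\tilde v(t)\rangle$ along one explicit parametrised path and localises to a small interval around the critical time, whereas you read off the jump of the fixed point from one endpoint of $e$ to the other via the normal fan of the polygon $F$.
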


\begin{figure}[ht]
    \centering
    \begin{tikzpicture}[x=0.75pt,y=0.75pt,yscale=-1,xscale=1]

\draw   (412.4,2454.4) -- (452,2518.5) -- (428.4,2646.4) -- (398.9,2659.4) -- (390.4,2561.9) -- cycle ;
\draw   (616.9,2453.4) -- (654.4,2532.4) -- (630.4,2654.9) -- (604.4,2643.9) -- (595.9,2545.9) -- cycle ;
\draw  [dash pattern={on 4.5pt off 4.5pt}]  (398.9,2659.4) -- (438.2,2662) ;
\draw  [dash pattern={on 4.5pt off 4.5pt}]  (630.4,2654.9) -- (669.7,2657.5) ;
\draw  [dash pattern={on 4.5pt off 4.5pt}]  (616.9,2453.4) -- (655.2,2442.9) ;
\draw  [dash pattern={on 4.5pt off 4.5pt}]  (412.4,2454.4) -- (453.2,2447.4) ;
\draw [color={rgb, 255:red, 208; green, 2; blue, 27 }  ,draw opacity=1 ]   (412.4,2454.4) -- (390.4,2561.9) ;
\draw [color={rgb, 255:red, 208; green, 2; blue, 27 }  ,draw opacity=1 ]   (390.4,2561.9) -- (398.9,2659.4) ;
\draw [color={rgb, 255:red, 208; green, 2; blue, 27 }  ,draw opacity=1 ]   (616.9,2453.4) -- (595.9,2545.9) ;
\draw [color={rgb, 255:red, 208; green, 2; blue, 27 }  ,draw opacity=1 ]   (595.9,2545.9) -- (604.4,2643.9) ;
\draw [color={rgb, 255:red, 208; green, 2; blue, 27 }  ,draw opacity=1 ]   (604.4,2643.9) -- (630.4,2654.9) ;
\draw [color={rgb, 255:red, 74; green, 144; blue, 226 }  ,draw opacity=1 ]   (452,2518.5) -- (412.4,2454.4) ;
\draw [color={rgb, 255:red, 74; green, 144; blue, 226 }  ,draw opacity=1 ]   (452,2518.5) -- (428.4,2646.4) ;
\draw [color={rgb, 255:red, 74; green, 144; blue, 226 }  ,draw opacity=1 ]   (428.4,2646.4) -- (398.9,2659.4) ;
\draw [color={rgb, 255:red, 74; green, 144; blue, 226 }  ,draw opacity=1 ]   (654.4,2532.4) -- (630.4,2654.9) ;
\draw [color={rgb, 255:red, 74; green, 144; blue, 226 }  ,draw opacity=1 ]   (616.9,2453.4) -- (654.4,2532.4) ;

\draw (406.3,2631) node [anchor=north west][inner sep=0.75pt]  [color={rgb, 255:red, 74; green, 144; blue, 226 }  ,opacity=1 ] [align=left] {$\displaystyle e$};
\draw (612.3,2629.5) node [anchor=north west][inner sep=0.75pt]  [color={rgb, 255:red, 208; green, 2; blue, 27 }  ,opacity=1 ] [align=left] {$\displaystyle e$};
\draw (412.5,2543.4) node [anchor=north west][inner sep=0.75pt]   [align=left] {$\displaystyle F$};
\draw (616.5,2544.4) node [anchor=north west][inner sep=0.75pt]   [align=left] {$\displaystyle F$};

\end{tikzpicture}
    \caption{Illustration of Lemma \protect\ref{chains}, exaggerated for the sake of readability.}
\end{figure}

\begin{proof}
    Let $W^\perp = \vect(u_1,...,u_{d-2})$, where $u_1\in \vect[F]$ and $(u_1,...,u_{d-2})$ is orthogonal. The fact that an (oriented) edge belong in one of the two chains is characterised by the sign of the inner product of the direction of the edge and a vector orthogonal to the degenerating direction (see figure \ref{inner product}).
    
\begin{figure}[ht]
        \centering
        \begin{tikzpicture}[x=0.75pt,y=0.75pt,yscale=-1,xscale=1]

\draw   (225.7,2798.4) -- (312.7,2806.4) -- (335.2,2878.4) -- (297.7,2911.9) -- (266.7,2922.9) -- (187.2,2910.9) -- (159.2,2865.4) -- (195.7,2806.9) -- cycle ;
\draw    (126.4,2779.1) -- (126.4,2827.16) ;
\draw [shift={(126.4,2829.16)}, rotate = 270] [color={rgb, 255:red, 0; green, 0; blue, 0 }  ][line width=0.75]    (10.93,-3.29) .. controls (6.95,-1.4) and (3.31,-0.3) .. (0,0) .. controls (3.31,0.3) and (6.95,1.4) .. (10.93,3.29)   ;
\draw    (126.4,2779.1) -- (173.6,2779.16) ;
\draw [shift={(175.6,2779.16)}, rotate = 180.07] [color={rgb, 255:red, 0; green, 0; blue, 0 }  ][line width=0.75]    (10.93,-3.29) .. controls (6.95,-1.4) and (3.31,-0.3) .. (0,0) .. controls (3.31,0.3) and (6.95,1.4) .. (10.93,3.29)   ;
\draw    (126,2948.76) -- (376,2949.16) ;
\draw  [dash pattern={on 4.5pt off 4.5pt}]  (159.2,2865.4) -- (158.87,2949) ;
\draw  [dash pattern={on 4.5pt off 4.5pt}]  (187.2,2910.9) -- (187.2,2949.33) ;
\draw  [dash pattern={on 4.5pt off 4.5pt}]  (195.7,2806.9) -- (195.2,2949) ;
\draw  [dash pattern={on 4.5pt off 4.5pt}]  (225.7,2798.4) -- (225.2,2949) ;
\draw  [dash pattern={on 4.5pt off 4.5pt}]  (266.7,2922.9) -- (266.53,2949) ;
\draw  [dash pattern={on 4.5pt off 4.5pt}]  (297.7,2911.9) -- (297.53,2949.33) ;
\draw  [dash pattern={on 4.5pt off 4.5pt}]  (312.7,2806.4) -- (312.2,2949.33) ;
\draw  [dash pattern={on 4.5pt off 4.5pt}]  (335.2,2878.4) -- (334.87,2949.33) ;
\draw [color={rgb, 255:red, 208; green, 2; blue, 27 }  ,draw opacity=1 ]   (159.2,2865.4) -- (194.64,2808.6) ;
\draw [shift={(195.7,2806.9)}, rotate = 121.96] [color={rgb, 255:red, 208; green, 2; blue, 27 }  ,draw opacity=1 ][line width=0.75]    (10.93,-3.29) .. controls (6.95,-1.4) and (3.31,-0.3) .. (0,0) .. controls (3.31,0.3) and (6.95,1.4) .. (10.93,3.29)   ;
\draw [color={rgb, 255:red, 208; green, 2; blue, 27 }  ,draw opacity=1 ]   (195.7,2806.9) -- (223.78,2798.95) ;
\draw [shift={(225.7,2798.4)}, rotate = 164.18] [color={rgb, 255:red, 208; green, 2; blue, 27 }  ,draw opacity=1 ][line width=0.75]    (10.93,-3.29) .. controls (6.95,-1.4) and (3.31,-0.3) .. (0,0) .. controls (3.31,0.3) and (6.95,1.4) .. (10.93,3.29)   ;
\draw [color={rgb, 255:red, 208; green, 2; blue, 27 }  ,draw opacity=1 ]   (225.7,2798.4) -- (310.71,2806.22) ;
\draw [shift={(312.7,2806.4)}, rotate = 185.25] [color={rgb, 255:red, 208; green, 2; blue, 27 }  ,draw opacity=1 ][line width=0.75]    (10.93,-3.29) .. controls (6.95,-1.4) and (3.31,-0.3) .. (0,0) .. controls (3.31,0.3) and (6.95,1.4) .. (10.93,3.29)   ;
\draw [color={rgb, 255:red, 208; green, 2; blue, 27 }  ,draw opacity=1 ]   (312.7,2806.4) -- (334.6,2876.49) ;
\draw [shift={(335.2,2878.4)}, rotate = 252.65] [color={rgb, 255:red, 208; green, 2; blue, 27 }  ,draw opacity=1 ][line width=0.75]    (10.93,-3.29) .. controls (6.95,-1.4) and (3.31,-0.3) .. (0,0) .. controls (3.31,0.3) and (6.95,1.4) .. (10.93,3.29)   ;
\draw [color={rgb, 255:red, 208; green, 2; blue, 27 }  ,draw opacity=1 ]   (158.87,2949) -- (193.2,2949) ;
\draw [shift={(195.2,2949)}, rotate = 180] [color={rgb, 255:red, 208; green, 2; blue, 27 }  ,draw opacity=1 ][line width=0.75]    (10.93,-3.29) .. controls (6.95,-1.4) and (3.31,-0.3) .. (0,0) .. controls (3.31,0.3) and (6.95,1.4) .. (10.93,3.29)   ;
\draw [color={rgb, 255:red, 208; green, 2; blue, 27 }  ,draw opacity=1 ]   (195.2,2949) -- (223.2,2949) ;
\draw [shift={(225.2,2949)}, rotate = 180] [color={rgb, 255:red, 208; green, 2; blue, 27 }  ,draw opacity=1 ][line width=0.75]    (10.93,-3.29) .. controls (6.95,-1.4) and (3.31,-0.3) .. (0,0) .. controls (3.31,0.3) and (6.95,1.4) .. (10.93,3.29)   ;
\draw [color={rgb, 255:red, 208; green, 2; blue, 27 }  ,draw opacity=1 ]   (225.2,2949) -- (310.2,2949.33) ;
\draw [shift={(312.2,2949.33)}, rotate = 180.22] [color={rgb, 255:red, 208; green, 2; blue, 27 }  ,draw opacity=1 ][line width=0.75]    (10.93,-3.29) .. controls (6.95,-1.4) and (3.31,-0.3) .. (0,0) .. controls (3.31,0.3) and (6.95,1.4) .. (10.93,3.29)   ;
\draw [color={rgb, 255:red, 208; green, 2; blue, 27 }  ,draw opacity=1 ]   (312.2,2949.33) -- (332.87,2949.33) ;
\draw [shift={(334.87,2949.33)}, rotate = 180] [color={rgb, 255:red, 208; green, 2; blue, 27 }  ,draw opacity=1 ][line width=0.75]    (10.93,-3.29) .. controls (6.95,-1.4) and (3.31,-0.3) .. (0,0) .. controls (3.31,0.3) and (6.95,1.4) .. (10.93,3.29)   ;
\draw [color={rgb, 255:red, 74; green, 144; blue, 226 }  ,draw opacity=1 ]   (335.2,2878.4) -- (299.19,2910.57) ;
\draw [shift={(297.7,2911.9)}, rotate = 318.22] [color={rgb, 255:red, 74; green, 144; blue, 226 }  ,draw opacity=1 ][line width=0.75]    (10.93,-3.29) .. controls (6.95,-1.4) and (3.31,-0.3) .. (0,0) .. controls (3.31,0.3) and (6.95,1.4) .. (10.93,3.29)   ;
\draw [color={rgb, 255:red, 74; green, 144; blue, 226 }  ,draw opacity=1 ]   (297.7,2911.9) -- (268.58,2922.23) ;
\draw [shift={(266.7,2922.9)}, rotate = 340.46] [color={rgb, 255:red, 74; green, 144; blue, 226 }  ,draw opacity=1 ][line width=0.75]    (10.93,-3.29) .. controls (6.95,-1.4) and (3.31,-0.3) .. (0,0) .. controls (3.31,0.3) and (6.95,1.4) .. (10.93,3.29)   ;
\draw [color={rgb, 255:red, 74; green, 144; blue, 226 }  ,draw opacity=1 ]   (266.7,2922.9) -- (189.18,2911.2) ;
\draw [shift={(187.2,2910.9)}, rotate = 8.58] [color={rgb, 255:red, 74; green, 144; blue, 226 }  ,draw opacity=1 ][line width=0.75]    (10.93,-3.29) .. controls (6.95,-1.4) and (3.31,-0.3) .. (0,0) .. controls (3.31,0.3) and (6.95,1.4) .. (10.93,3.29)   ;
\draw [color={rgb, 255:red, 74; green, 144; blue, 226 }  ,draw opacity=1 ]   (187.2,2910.9) -- (160.25,2867.1) ;
\draw [shift={(159.2,2865.4)}, rotate = 58.39] [color={rgb, 255:red, 74; green, 144; blue, 226 }  ,draw opacity=1 ][line width=0.75]    (10.93,-3.29) .. controls (6.95,-1.4) and (3.31,-0.3) .. (0,0) .. controls (3.31,0.3) and (6.95,1.4) .. (10.93,3.29)   ;
\draw [color={rgb, 255:red, 74; green, 144; blue, 226 }  ,draw opacity=0.5 ]   (334.87,2949.33) -- (299.53,2949.33) ;
\draw [shift={(297.53,2949.33)}, rotate = 360] [color={rgb, 255:red, 74; green, 144; blue, 226 }  ,draw opacity=0.5 ][line width=0.75]    (10.93,-3.29) .. controls (6.95,-1.4) and (3.31,-0.3) .. (0,0) .. controls (3.31,0.3) and (6.95,1.4) .. (10.93,3.29)   ;
\draw [color={rgb, 255:red, 74; green, 144; blue, 226 }  ,draw opacity=0.5 ]   (297.53,2949.33) -- (268.53,2949.02) ;
\draw [shift={(266.53,2949)}, rotate = 0.62] [color={rgb, 255:red, 74; green, 144; blue, 226 }  ,draw opacity=0.5 ][line width=0.75]    (10.93,-3.29) .. controls (6.95,-1.4) and (3.31,-0.3) .. (0,0) .. controls (3.31,0.3) and (6.95,1.4) .. (10.93,3.29)   ;
\draw [color={rgb, 255:red, 74; green, 144; blue, 226 }  ,draw opacity=0.5 ]   (266.53,2949) -- (189.2,2949.32) ;
\draw [shift={(187.2,2949.33)}, rotate = 359.76] [color={rgb, 255:red, 74; green, 144; blue, 226 }  ,draw opacity=0.5 ][line width=0.75]    (10.93,-3.29) .. controls (6.95,-1.4) and (3.31,-0.3) .. (0,0) .. controls (3.31,0.3) and (6.95,1.4) .. (10.93,3.29)   ;
\draw [color={rgb, 255:red, 74; green, 144; blue, 226 }  ,draw opacity=0.5 ]   (187.2,2949.33) -- (160.87,2949.02) ;
\draw [shift={(158.87,2949)}, rotate = 0.67] [color={rgb, 255:red, 74; green, 144; blue, 226 }  ,draw opacity=0.5 ][line width=0.75]    (10.93,-3.29) .. controls (6.95,-1.4) and (3.31,-0.3) .. (0,0) .. controls (3.31,0.3) and (6.95,1.4) .. (10.93,3.29)   ;
\draw   (126.4,2779.1) -- (134.07,2779.1) -- (134.07,2786.77) -- (126.4,2786.77) -- cycle ;

\draw (83.2,2796.8) node [anchor=north west][inner sep=0.75pt]   [align=left] {$\displaystyle u_{1}( t)$};
\draw (151,2779.13) node [anchor=north west][inner sep=0.75pt]   [align=left] {$\displaystyle \tilde{v}( t)$};

\end{tikzpicture}
        \caption{Illustration of the characterisation of the chains}
        \label{inner product}
    \end{figure}

    With that in mind, we aim at finding two orthogonals such that the only vector whose sign of the inner product described above changes is the orientation of $e$. Let us denote it $\vec e$ and suppose it is a unit vector. Let $v \in \vect[F]$ a unit vector orthogonal to $\vec e$. Then we can suppose that $u_1 = \vec e + \lambda v$ with $\lambda>0$, up to changing $v$ into $-v$. We consider the transformation \[ t\in \mathopen[0,\lambda+\varepsilon \mathclose] \mapsto (u_1-tv,u_2,...,u_{d-2})=(u_1(t),u_2,...,u_{d-2}) \] Along this path, $F$ stays degenerated while every other 2-face of $P$ degenerates at most once. Thus, we can choose $\varepsilon>0$ small enough such that there is only one time of degeneration that occurs on $\mathopen[ \lambda-\varepsilon,\lambda+\varepsilon \mathclose]$ (and it occurs at $\lambda$) other than the degeneration of $F$, as it occurs on the whole segment.
    
    The family $(u_1(t),v)$ is free for all $t$. Let us turn it into an orthogonal one $(u_1(t),\tilde v(t))$ : \[ \tilde v(t) = -\frac{\lambda-t}{||u_1(t)||^2}\vec e + \left(1+ \frac{(\lambda-t)^2}{||u_1(t)||^2} \right)v \] Now, let $\vec e_1 = \vec e,\vec e_2,...,\vec e_N$ be all the directions of the edges of $F$. We are interested in the sign changes of the functions $t\mapsto \langle \vec e_i,\tilde v(t) \rangle $ since a change of sign means a change of visibility chain for $e_i$. With our choice of $\varepsilon$, we know that there is only one such change that could occurs on $\mathopen[ \lambda-\varepsilon,\lambda+\varepsilon \mathclose]$ because of $\vec e$. And, in this case we have \[ \langle \vec e,\tilde v(t) \rangle = -\frac{(\lambda-t)}{||u_1(t)||^2} \]
    
    So between $\lambda-\varepsilon$ and $\lambda+\varepsilon$, the only difference in the chains comes from $e$. According to the previous lemma and the remark, we can find two different elementary transformations such that their visible chains satisfy $V_{1}(F) = V_{2}(F) \sqcup \{e\}$.
\end{proof}

We discuss later in the proof of the reciprocal of Theorem \ref{frt_implication} what happen if a parallel 2-face $F'$ is involved:

\begin{thm}
    Let $P$ be a polytope. If $P$ is equiprojective, then its edge-2-faces can be partitioned into pairs that compensate each other.
\end{thm}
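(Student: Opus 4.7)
The strategy inverts the argument of Theorem \ref{frt_implication}: I use equiprojectivity as a balance constraint which, combined with the two elementary transformations provided by Lemma \ref{chains}, forces each edge-2-face to admit a compensating partner. Along any elementary transformation that swaps the visibility of a class $\{F, F_0\}$, equiprojectivity yields the balance identity
\[
|V(F)| + |V(F_0)| = |I(F)| + |I(F_0)|.
\]

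Fix an edge-2-face $(e, F, F_0)$ with $F_0 \neq \varnothing$ and assume first the generic case in which $F$ has no other edge parallel to $e$. Lemma \ref{chains} produces two elementary transformations $T_1, T_2$ of the class of $F$ with $V_1(F) = V_2(F) \sqcup \{e\}$. A direct inspection of the construction in Lemma \ref{chains}---where $u_1$ is perturbed across the line $\R\vec{e}$ and the chain membership of an edge $e'$ is dictated by the sign of $\langle \vec{e}', \tilde v(t) \rangle$---shows that the only edges of $F_0$ whose chain membership can differ between $T_1$ and $T_2$ are those parallel to $e$. Subtracting the balance identity for $T_2$ from that of $T_1$ and using that $|V(F_0)| + |I(F_0)|$ is constant gives $|V_1(F_0)| - |V_2(F_0)| = -1$. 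Since a convex polygon carries at most one edge in each fixed oriented direction, this forces $F_0$ to contain a unique edge $\tilde e$ parallel to $e$ whose chain flips; the sign calculation simultaneously fixes the direction of $\tilde e$ to be opposite to $\vec e$, in agreement with the orientation convention on edge-2-faces coming from the 3-polytope obtained by intersecting $P$ with the affine hull of $F \cup F_0$. Thus $(\tilde e, F_0, F)$ compensates $(e, F, F_0)$. The case $F_0 = \varnothing$ is treated identically, the partner $\tilde e$ now lying inside $F$; the same counting also shows that a generic $(e, F, \varnothing)$ with $F$ carrying no other edge parallel to $e$ cannot occur in an equiprojective polytope.

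In the non-generic case where $F$ or $F_0$ carries additional edges parallel to $e$, several parallel edges now flip simultaneously when one applies the construction of Lemma \ref{chains}. The same balance identity, combined with the constraint that a convex polygon carries at most one edge in a given oriented direction, shows that the simultaneously flipping edges in $F \cup F_0$ must come in pairs of opposite orientations, each such pair producing two mutually compensating edge-2-faces. Since compensation is symmetric in $(e, F, F_0)$ and $(\tilde e, F_0, F)$ (or $(\tilde e, F, F_0)$), the resulting matching is a partition.

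The main obstacle is this non-generic case: one must show that the simultaneously flipping parallel edges in $F$ and $F_0$ can be consistently paired with opposite orientations, and that the resulting partition does not depend on the elementary transformation used to construct it. This extends the delicate combinatorial bookkeeping of Hasan--Lubiw to the general chain structure appearing in higher dimension.
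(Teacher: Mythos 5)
Your overall strategy is the same as the paper's: combine the two elementary transformations of Lemma \ref{chains} with the balance identity $|V(F)|+|V(F_0)|=|I(F)|+|I(F_0)|$ that equiprojectivity forces on every elementary transformation, and track which edges of $F_0$ change chains. You run it as a direct construction of the matching, whereas the paper argues by contraposition with an explicit four-case analysis of a non-partitionable ``group'' of edge-2-faces; in the generic case (where $F$ has no second edge parallel to $e$) your counting $|V_1(F_0)|-|V_2(F_0)|=-1$ is exactly the paper's subtraction of the two instances of \eqref{eq_2}, and that part is correct, including the observation that $(e,F,\varnothing)$ with no parallel partner contradicts equiprojectivity.

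The genuine gap is the non-generic case, which you flag as ``the main obstacle'' but do not close. Two things resolve it, and both are simpler than the analysis of ``simultaneously flipping edges'' you sketch. First, if $F$ itself carries a second edge $\tilde e$ parallel to $e$, then $e$ and $\tilde e$ automatically have opposite orientations (they are the two parallel edges of a convex polygon traversed along its boundary cycle), so $(e,F,F_0)$ and $(\tilde e,F,F_0)$ compensate each other with no transformation needed; note that Lemma \ref{chains} does not even apply to $(e,F)$ in this situation, since its hypothesis requires $F$ to have no other edge parallel to $e$, so your plan to ``apply the construction of Lemma \ref{chains}'' there does not literally make sense. Second, the only configuration this leaves unaccounted for is when $F$ has two edges parallel to $e$ and $F_0$ has exactly one, which would strand the edge-2-face of $F_0$; this is excluded by swapping the roles of the two faces and applying your generic counting to $(e',F_0,F)$, where the face carrying the unique parallel edge is now the one Lemma \ref{chains} is applied to and the other face contributes a net chain change of $0$ instead of the required $\pm 1$. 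With these two observations the partition exists group by group, and your worry about the partition depending on the choice of elementary transformation evaporates: the pairing is determined by parallelism and orientation alone, and the transformations are used only to derive constraints. As written, however, the non-generic case remains an acknowledged hole in the argument.
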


\begin{proof}
    It is exactly what is done in \cite{MHAL}, that we will repeat with our notations for the sake of completeness. An edge-2-faces $(e,F,F')$ lives in a group with at most three other “compatible” edges-2-faces: $(\tilde e,F,F')$ with $\tilde e \neq e $, $(e',F',F)$ and $(\tilde e',F',F)$ with $\tilde e' \neq e' $ (if they exist). Then our goal is to partition such group. By contraposition, suppose that there is a group containing the edge-2-faces $(e,F,F')$ that can not be partitioned. There are only four possible cases: \begin{enumerate}
        \item no other edge of $F$ or $F'$ are parallel to $e$ ($F'$ is possibly empty).
        \item $F$ has no other edge parallel to $e$, and $F'$ has exactly one with the same orientation.
        \item $F$ has no other edge parallel to $e$ and $F'$ has two.
        \item $F$ has another edge parallel to $e$ and $F'$ has only one
    \end{enumerate}
    By symmetry of the roles of $F$ and $F'$ in the last two cases, it is enough to suppose that $F$ has no other edge parallel to $e$. With the previous lemma, we can build two different elementary transformations such that $V_{1}(F) = V_{2}(F) \sqcup \{e\}$. We show that for one or the other elementary transformation, we have that \begin{equation} \label{eq_2}
        |V_i(F)| + |V_i(F')| \neq |I_i(F)| + |I_i(F')|
    \end{equation} or equivalently, in order to have all instances of $F$ on the same side: \[ |V_i(F)| - |I_i(F)| \neq |I_i(F')| - |V_i(F')| \]
    
    By contradiction, suppose that we have equality in both cases. Subtracting them yields \[ |V_1(F)| - |V_2(F)| + |I_2(F)| - |I_1(F)| = |V_2(F')| - |V_1(F')| + |I_1(F')| - |I_2(F')| \] In every case, the left hand-side is equal to 2. When it comes to the right hand-side:
    
    In the first case, the chains do not change between the two transformations, and so the right hand-side is equal to 0.
    
    In the second case, the edge parallel to $e$ was in $V_1(F')$ but ended up in $I_2(F')$, and so the right hand-side is equal to -2.
    
    In the third case, the two edges of $F'$ parallel to $e$ lie in different chains of visibility. After the change of visibility of $e$, they exchange theirs. So the left hand-side is equal to 0.
    
    Thus, in every case, we get a contradiction. So one or the other equality in equation \eqref{eq_2} is not satisfied, and $P$ is not equiprojective.
\end{proof}

Hence the characterisation of Theorem \ref{charact}. Similarly to the 3-dimensional Hasan--Lubiw characterisation, this one could lead to an algorithm that can check if a given polytope is equiprojective. The main issue in the high-dimensional case would be listing all the couples of simultaneously visible 2-faces we need to consider. Furthermore, as the work of Théophile Buffière and Lionel Pournin in \cite{TBLP} relies on the result of \cite{MHAL}, this generalisation gives us hope to generalise their work as well.

\bigskip

\textbf{Acknowledgement.} The author of this article is grateful to Lionel Pournin, for his careful reading and the pieces of advice he gave, and to Andrea Sportiello, for a very interesting discussion regarding the Grassmannian.

\printbibliography

@book{ZIEG,
    author = {Günter M. Ziegler},
    title = {Lectures on polytopes},
    publisher = {Springer} ,
    year = {1995},
    edition = {7} }

@article{MHAL,
    author = {Masud Hasan and Anna Lubiw},
    title = {Equiprojective polyhedra} ,
    journal = {Computational Geometry} ,
    year = {2008},
    pages = {148--155},
    volume = {40}
}

@article{TBLP,
    author = {Théophile Buffière and Lionel Pournin} ,
    title = {Many equiprojective polytopes} ,
    journal = {Discrete and computational geometry} ,
    year = {2025},
    volume = {74},
    pages = {337--357}
}

@article{SHEPH,
    author = {Geoffrey C. Shephard} ,
    title = {Twenty problems on convex polyhedra part I} ,
    journal = {The Mathematical Gazette} ,
    year = {1968},
    volume = {52},
    number = {380},
    pages = {136--147}
}

@article{SHEPH2,
    author = {Geoffrey C. Shephard} ,
    title = {Twenty problems on convex polyhedra part II} ,
    journal = {The Mathematical Gazette} ,
    year = {1968},
    volume = {52},
    number = {382},
    pages = {359--367}
}

@unpublished{Black,
  title={Random Shadows of Fixed Polytopes},
  author={Alexander E. Black and Francisco Criado},
  year={2024},
    archivePrefix = {arXiv},
    eprint = {2406.06936},
    primaryClass = {math.CO}
}

@article{DDSH,
    author = {Daniel Dadush and Sophie Huiberts},
    title = {A friendly smooth analysis of the simplex method},
    journal = {Proceedings of the 50th Annual ACM SIGACT Symposium on Theory of Computing} ,
    year = {2018}, 
    pages = {390--403} }

@article{TR,
    author = {Thomas Rothvoss},
    title = {The Matching Polytope has Exponential Extension Complexity},
    journal = {Journal of the ACM},
    volume = {64},
    number = {6},
    year = {2017},
    pages = {41:1--41:19}
    }

@article{Exp_Low,
    author = {Samuel Fiorini and Serge Massar and Sebastian Pokutta and Hans Raj Tiwary and Ronald de Wolf},
    title = {Exponential Lower Bounds for Polytopes in Combinatorial Optimization},
    journal = {Journal of the ACM},
    volume = {62},
    number = {2},
    year = {2015},
    pages = {17:1--17:23}
    }

@book{Pb_in_geom,
    author = {Hallard T. Croft and Kenneth J. Falconer and Richard K. Guy},
    title = {Unsolved problems in geometry},
    year = {1991},
    publisher = {Springer New York, NY}
    }

@article{MH,
    author ={Masud Hasan and Mohammad Monoar Hossain and Alejandro López-Ortiz and Sabrina Nusrat and Saad Altaful Quader and Nabila Rahman},
    title = {Some New Equiprojective Polyhedra},
    year = {2022},
    journal = {Geocombinatorics},
    volume = {31},
    number ={4},
    pages = {170--177}
}

@article{moser,
    author = {Jeffrey C. Lagarias and Yusheng Luo and Arnau Padrol} ,
    title = {Moser's Shadow problem} ,
    journal = {L'Enseignement Mathématique} ,
    year = {2018},
    pages = {477--496},
    volume = {64},
    number = {2}
}

@article{lin_op_1,
    author = {Jean Cardinal and Raphael Steiner},
    title = {Inapproximability of shortest paths on perfect matching polytopes},
    journal = {Mathematical Programming},
	year = {2025},
	volume = {210},
	number = {1},
	pages = {147--163}
}

@article{lin_op_2,
    author = {Alexander E. Black and Jes\'us A. De Loera and Sean Kafer and Laura Sanità},
    title = {On the Simplex Method for 0/1-Polytopes},
    journal = {Mathematics of Operations Research},
    year = {2025},
    volume = {50},
    number = {2},
    pages = {1398--1420}
}

@article{lin_op_3,
    author = {Michele Barbato and Roland Grappe and Mathieu Lacroix and Emiliano Lancini},
    title = {Box-total dual integrality and edge-connectivity},
    journal = {Mathematical Programming},
    year = {2023},
    volume = {198},
    number = {1},
    pages = {307--336}
}

@article{lin_op_4,
    author = {Antoine Deza and Shmuel Onn and Sebastian Pokutta and Lionel Pournin},
    title = {Kissing Polytopes},
    journal = {SIAM Journal on Discrete Mathematics},
    year = {2024},
    volume = {38},
    number = {4},
    pages = {2643-2664}
}

@article{alg,
    author = {Jean-Louis Loday},
    title = {Realization of the Stasheff polytope},
    journal = {Archiv der Mathematik},
    year = {2004},
    volume = {83},
    number = {3},
    pages = {267--278}
}

@book{conv_an,
    author = {Matthias Beck and Sinai Robins},
    title = {Computing the continuous discretely},
    year = {2015},
    publisher = {Springer}
}

@article{DDNH,
    author = {Daniel Dadush and Nicolai Hähnle},
    title = {On the shadow simplex method for curved polyhedra},
    journal = {Discrete \& Computational Geometry},
    year = {2016},
    volume = {56},
    number = {4},
    pages = {882--909}
}

@article{lien_1,
    author = {Antoine Deza and Lionel Pournin and Noriyoshi Sukegawa},
    title = {The diameter of lattice zonotopes},
    journal = {Proceedings of the American Mathematical Society},
    year = {2020},
    volume = {148},
    number = {8},
    pages = {3507--3516}
}

@article{lien_2,
    author = {Francisco Santos},
    title = {A counter example to the Hirsh conjecture},
    journal = {Annals of Mathematics},
    year = {2012},
    volume = {176},
    number = {1},
    pages = {383--412}
}

@article{lien_3,
    author = {Lei Xue},
    title = {A Proof of Grünbaum’s Lower Bound Conjecture for general polytopes},
    journal = {Israel Journal of Mathematics},
    year = {2021},
    volume = {245},
    pages = {991--1000}
}

@article{affine_1,
    author = {Katalin Berlow and Marie-Charlotte Brandenburg and Chiara Meroni and Isabelle Shankar },
    title = {Intersection bodies of polytopes},
    journal = {Contribution to Algebra and Geometry},
    year = {2022},
    volume = {63},
    pages = {419--439}
}

@article{affine_2,
    author = {Marie-Charlotte Brandenburg and Jesús A. De Loera and Chiara Meroni},
    title = {The best way to slice a Polytope},
    journal = {Mathematics of Computation},
    year = {2025},
    volume = {94},
    pages = {1003--1042}
}

@unpublished{affine_3,
    author = {Marie-Charlotte Brandenburg and Chiara Meroni},
    title = {Combinatorics of slices of cubes},
    year = {2025},
    archivePrefix = {arXiv},
    eprint = {2510.09265},
    primaryClass = {math.CO}
}

@article{affine_4,
    author = {Lionel Pournin},
    title = {Shallow sections of the hypercube},
    journal = {Israel Journal of Mathematics},
    year = {2023},
    volume = {255},
    pages = {685--704}
}

\end{document}